\documentclass[12pt]{article}
\usepackage[utf8]{inputenc}
\usepackage{geometry}
\usepackage{float}
\usepackage{latexsym,bm}
\usepackage{amsmath,amsfonts,amsthm,amssymb,bbding,mathrsfs,mathtools}
\usepackage{fancyhdr}
\usepackage{graphicx}
\usepackage{hyperref}
\usepackage{xcolor}
\usepackage{authblk}
\usepackage{newunicodechar}
\newunicodechar{，}{,}

\newtheorem{thm}{Theorem}[section]

\newtheorem{lemma}{Lemma}[section]
\newtheorem{cor}{Corollary}[section]

\newtheorem{conj}{Conjecture}[section]
\newtheorem{claim}{Claim}[section]

\title{Pancyclicity in graph families with the Ore-type condition}

\author[1]{Luyi Li}
\author[1,2]{Yubo Wang}
\author[1,2,$\ast$]{Guiying Yan}

\affil[1]{\small Academy of Mathematics and Systems Science, Chinese Academy of Sciences, Beijing, China.}
\affil[2]{\small University of Chinese Academy of Sciences, Beijing, China.}
\affil[$\ast$]{\small Corresponding author: \texttt{yangy@amt.ac.cn}}

\date{}

\begin{document}
\maketitle

\begin{abstract}
Let $ n \in \mathbb{N} $ with $ n \geq 3 $, and let $\mathcal{G} = \{G_i:i\in [n]\} $ be a family of $ n $-vertex graphs on a common vertex set $V$, where the graphs in the family do not need to be distinct. A graph  $H$ with vertex set $V$ is  \emph{rainbow} in $\mathcal{G}$ if there exists an injection $ \phi: E(H) \to [n] $ such that $e \in E(G_{\phi(e)})$ for every edge $e \in E(H)$, where $|E(H)|\leq n$.  In 2020, Joos and Kim proved that $\mathcal{G}$ contains a rainbow Hamiltonian cycle under the Dirac-type condition. Recently, Liu, Chen, and Ma generalized this result by replacing the Dirac-type condition with a more general Ore-type condition involving degree sums of non-adjacent vertices: If $\sigma(\mathcal{G}) \geq n$, then $\mathcal{G}$ contains a rainbow Hamiltonian cycle, where the Ore-type condition $\sigma(\mathcal{G})$ is defined as follows:
$
\sigma(\mathcal{G}) = \min\{d_p(u) + d_q(v) \mid uv \notin E(G_i) \text{ for some } i \in [n] \text{ and for all } p, q \in [n]\}.
$
In this paper, under the Ore-type condition, we show that either each vertex of $V$ is contained in a rainbow cycle of length $\ell$ for every $\ell\in[4,n]$, or $G_1=\cdots=G_n=K_{\frac{n}{2},\frac{n}{2}}$. As a corollary, we deduce the rainbow pancyclicity of $\mathcal{G}$, which
 supports the famous meta-conjecture posed by Bondy. Furthermore, we prove rainbow vertex-pancyclicity of $\mathcal{G}$ under the Ore-type condition and provide an extremal graph family to show that the result is sharp.

 \textbf{Keywords:} Graph families, Rainbow subgraphs, Ore-type condition, Pancyclicity
\end{abstract}

\maketitle

\section{Introduction} 
   The study of Hamiltonian cycles and pancyclicity in graphs has been a central theme in graph theory for decades. Classical results such as Dirac's Theorem \cite{dirac1952} and Ore's Theorem \cite{ore1960} establish fundamental connections between degree conditions and the existence of Hamiltonian cycles. Dirac's Theorem states that every $n$-vertex graph $G$ with $\delta(G)\geq n/2$ contains a Hamiltonian cycle, where $\delta(G)$ denotes the minimum value of the degrees of all vertices in $G$. On the one hand, this result was subsequently strengthened by Ore: every $n$-vertex graph $G$ with $\sigma(G)\geq n$ contains a Hamiltonian cycle, where $\sigma(G)$ denotes the minimum value of the sums of the degrees among all pairs of non-adjacent vertices in $G$. Hence, $\delta(G)$ and $\sigma(G)$ are also referred to as Dirac-type and Ore-type conditions, respectively. There are many sufficient conditions for Hamiltonicity; see, for example, \cite{fan1984,ghouila1960}.
   On the other hand, under the same conditions in Dirac's Theorem, Bondy \cite{bondy1971pancyclic} proved a stronger result: every $n$-vertex graph $G$ with $\delta(G)\geq n/2$ is pancyclic or $G=K_{\frac{n}{2},\frac{n}{2}}$, where a graph is \emph{pancyclic} if it contains cycles of all possible lengths.  Then, Bondy \cite{Bondy-pancyclic} proposed the famous meta-conjecture: Almost any nontrivial sufficient condition for the Hamiltonicity of graphs can also guarantee the pancyclicity of graphs, except for maybe a simple family of exceptional graphs.
   To support this conjecture, Bondy \cite{bondy1971pancyclic} proved the following result:
   \begin{thm}[\cite{bondy1971pancyclic}]\label{Ore-pancyclic}
       Every $n$-vertex graph $G$ with $\sigma(G)\geq n$ is pancyclic or $G=K_{\frac{n}{2},\frac{n}{2}}$.
   \end{thm}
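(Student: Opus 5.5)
By Ore's Theorem, $G$ has a Hamiltonian cycle $C=v_1v_2\cdots v_nv_1$. The plan is to combine this with Bondy's edge-count lemma: \emph{if a graph on $n$ vertices has a Hamiltonian cycle and at least $n^2/4$ edges, then it is pancyclic or equals $K_{\frac n2,\frac n2}$.} So the proof splits into two parts: (a) show $|E(G)|\ge n^2/4$ under $\sigma(G)\ge n$, and (b) prove the edge-count lemma for Hamiltonian graphs.

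\textbf{Step (a).} If $G$ is complete we are done, so assume $G$ is not complete. Take a vertex $u$ of minimum degree $\delta=\delta(G)$. If $\delta\ge n/2$, then $2|E|\ge n\cdot n/2$. Otherwise each of the $n-1-\delta$ vertices nonadjacent to $u$ has degree at least $n-\delta$. Hence
\[
2|E|\ \ge\ (n-1-\delta)(n-\delta)+(\delta+1)\delta .
\]
This quadratic in $\delta$ is minimized near $\delta=(n-1)/2$, where its value is $\ge n^2/2$ after routine checking. It may be cleaner to handle the parity separately, but the aim is $|E|\ge n^2/4$.

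\textbf{Step (b).} Argue by contradiction: suppose $G$ has no cycle of length $\ell$ for some $3\le \ell\le n-1$. For each $i$, the chords $v_iv_j$ and $v_{i+1}v_{j+\ell-1}$ (indices mod $n$), oriented suitably, cannot both be present. If they were, going from $v_i$ to $v_j$ and back along arcs of $C$ of total length $\ell-2$ would close a cycle of length $\ell$. This gives an injection pairing potential chords at $v_i$ with forbidden chords at $v_{i+1}$. Summing over $i$ yields $d(v_i)+d(v_{i+1})\le n+\epsilon$ on average. Consequently $|E|\le n^2/4$, with equality forcing $d(v_i)+d(v_{i+1})=n$ for all $i$, together with the rigid structure of the pairing.

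In the equality case, the structure forces $G$ to have no triangle and to be bipartite with alternating sides along $C$, with every cross pair adjacent, i.e.\ $G=K_{\frac n2,\frac n2}$. Throughout we use that $\sigma(G)\ge n$ already excludes many near-extremal configurations: when $G\neq K_{\frac n2,\frac n2}$, some odd cycle must exist, and this can be bootstrapped into all lengths.

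The main obstacle is the counting in Step (b): setting up the injection so that the bound on $d(v_i)+d(v_{i+1})$ holds exactly, which requires a careful choice of which pair of chords is forbidden for each $\ell$. Analyzing the equality case precisely enough to identify $K_{\frac n2,\frac n2}$ is also delicate. I would first try the classical Bondy argument of considering the $n$ pairs $(v_iv_{i+k},\,v_{i+1}v_{i+k+1-\ldots})$ and double counting over all $i$, and then apply the result at $\ell$ and use induction downward in $\ell$ if needed.
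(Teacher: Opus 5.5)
The paper does not prove this statement; it quotes it from Bondy and uses it as a black box, so your proposal has to stand on its own. Reducing to Bondy's edge lemma is the classical route. However, Step (b), which carries the whole content, is not carried out, and the counting you describe cannot work for general $\ell$. Take the configuration in the orientation that actually closes an $\ell$-cycle: chords $v_iv_{i+a}$ and $v_{i+1}v_{i+a+n-\ell+1}$, with cycle $v_iv_{i+a}\overleftarrow{C}v_{i+1}v_{i+a+n-\ell+1}\overrightarrow{C}v_i$. It is valid only for $a\in[1,\ell-2]$. So the pairing controls just $\ell-2$ positions, and what it yields is about $d(v_i)+d(v_{i+1})\le 2n-\ell-1$, which is at most $n$ only when $\ell=n-1$. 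The nested configuration ($v_iv_j$, $v_{i+1}v_{j-\ell+3}$) has the same defect. Moreover, an averaged bound ``$n+\epsilon$'' would not give $|E|\le n^2/4$ anyway. The equality analysis and the claim that an odd cycle ``can be bootstrapped into all lengths'' are asserted, not argued.

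What is missing is Bondy's induction on $n$, which uses the counting only for $\ell=n-1$.

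\emph{No $(n-1)$-cycle.} Then $v_iv_j\in E$ forces $v_{i+1}v_{j+2}\notin E$, since otherwise $v_iv_j\overleftarrow{C}v_{i+1}v_{j+2}\overrightarrow{C}v_i$ misses only $v_{j+1}$. So $v_j\mapsto v_{j+2}$ injects $N(v_i)$ into $V\setminus N(v_{i+1})$. Hence $d(v_i)+d(v_{i+1})\le n$ and $4|E|\le n^2$, and the equality case is $K_{\frac n2,\frac n2}$.

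\emph{An $(n-1)$-cycle $C'=u_0u_1\cdots u_{n-2}u_0$ missing $v$.} If $d(v)\ge n/2>(n-1)/2$, then for every $k\in[1,n-2]$ some orbit of $a\mapsto a+k$ on $\mathbb{Z}_{n-1}$ has more than half of its indices in $N(v)$. So some $u_a,u_{a+k}$ are both neighbours of $v$, and $vu_a\overrightarrow{C'}u_{a+k}v$ is a cycle of length $k+2$; thus $G$ is pancyclic. If $d(v)\le (n-1)/2$, then $G-v$ is Hamiltonian with $|E(G-v)|\ge n^2/4-(n-1)/2>(n-1)^2/4$. By induction $G-v$ is pancyclic (it cannot be $K_{\frac{n-1}{2},\frac{n-1}{2}}$), and so is $G$. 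This is also why the edge lemma, not the Ore condition, must be the induction hypothesis: $G-v$ need not satisfy $\sigma\ge n-1$.

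Step (a) also has a slip. For odd $n$ and $\delta=(n-1)/2$ your bound equals $(n^2-1)/2<n^2/2$. It is repaired by ruling out equality. Equality would make $\{u\}\cup N(u)$ a set of $(n+1)/2$ vertices of degree $(n-1)/2$, pairwise adjacent (two nonadjacent ones would have degree sum $n-1$). That set would then be a clique with no edges leaving it, so the remaining vertices could not reach degree $(n+1)/2$. Hence $|E|>(n^2-1)/4$, and integrality gives $|E|\ge n^2/4$.
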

In recent years, there has been growing interest in extending these classical results to the setting of graph families, leading to the development of transversal graph theory. This area concerns the existence of subgraphs that intersect each graph in a family in a prescribed manner, often requiring that different edges come from different graphs in the families. 
Let $n,a ,b$ be positive integers with $a \leq b $. We write $[n]=\{1,2,\cdots,n\}$ and $[a,b]=\{a,a+1,\cdots, b\}$; in particular, $[a,a]=\{a\}$ and $[0]=\varnothing $.
Let $n\in \mathbb{N}$ and $n\geq 3$, suppose $\mathcal{G}=\{G_{i}: i\in [t]\}$ is a family of $n$-vertex graphs with the same vertex set $V$, and the graphs in the family can be identical. For a graph $H$ with vertex set $V$ and $|E(H)|\leq t$, we say that $\mathcal{G}$ contains a \emph{rainbow} subgraph $H$ if there exists a mapping $\phi:E(H)\rightarrow[t]$ such that for each edge $e\in E(H)$, we have $e\in E(G_{\phi(e)})$; and if $|E(H)|=t$, i.e., $\phi$ is a bijection, then we say that $\mathcal{G}$ contains a \emph{transversal} isomorphic to $H$. A family $\mathcal{G}=\{G_{i}: i\in [n]\}$ of $n$-vertex graphs is \emph{rainbow pancyclic} if $\mathcal{G}$ contains a rainbow cycle of length $\ell$ for each $\ell\in[3,n]$. A family $\mathcal{G}=\{G_{i}: i\in [n]\}$ of $n$-vertex graphs is \emph{$[a,b]$-rainbow vertex-pancyclic} if each vertex of $V$ is contained in a rainbow cycle of length $\ell$  for each $\ell\in[a,b]$.
In particular, the family $\mathcal{G}$ is \emph{rainbow vertex-pancyclic} if each vertex of $V$ is contained in a rainbow cycle of length $\ell$ for each $\ell\in[3,n]$. If a rainbow path connecting $x$ and $y$ exists, let $d_G(x,y)$ denote the length of a shortest rainbow path connecting $x$ and $y$. The family $\mathcal{G}$ is \emph{rainbow panconnected} if, for any two vertices $x,y\in V$, there is a rainbow path of length $\ell$ starting at $x$ and ending at $y$ for each $\ell\in[d_{\mathcal{G}}(x,y),n-1]$.


In 2020, Aharoni et al.~\cite{aharoni2020rainbow} established a rainbow analogue of Mantel's theorem by determining an edge-density condition that guarantees a rainbow triangle in a graph family. Specifically, they proved that if $G_1,G_2,G_3$ are graphs on a common set of $n$ vertices and
\[
|E(G_i)| > \frac{1+\tau^2}{4}n^2
= \frac{26-2\sqrt{7}}{81}n^2
\quad \text{for each } i\in [3],
\]
where $\tau=\frac{4-\sqrt{7}}{9}$, then there exists a rainbow triangle in $\{G_1,G_2,G_3\}$. This work helped initiate the study of transversal and rainbow extremal problems for graph families. This result extends classical Mantel's theorem to graph families, establishing the edge number threshold for rainbow triangles. Furthermore, they proposed a conjecture regarding a transversal analogue of Dirac's theorem. This conjecture was solved by Cheng, Wang, and Zhao \cite{cheng2021rainbow} approximately. Eventually, it was resolved completely by Joos and Kim \cite{joos2020}. In their article, they stated:
\begin{thm}[\cite{joos2020}]\label{Joos-Hamiltonian}
Let $n\in \mathbb{N}$ and $n\geq 3$. Suppose $\mathcal{G}=\{G_{i}: i\in [n]\}$ is a family of $n$-vertex graphs with the same vertex set, and the graphs in the family can be identical. If for each $i\in[n]$, $\delta(G_{i})\geq n/2$, then there exists a transversal isomorphic to a Hamiltonian cycle, that is, $\mathcal{G}$ contains a rainbow Hamiltonian cycle.
\end{thm}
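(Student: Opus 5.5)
The plan is to follow the strategy of Joos and Kim: choose a longest rainbow path or cycle structure that is extremal in the sense of Pósa rotation–extension, and then exploit the minimum degree condition $\delta(G_i)\ge n/2$ in every colour to show that maximality forces a rainbow Hamiltonian cycle. Concretely, among all pairs $(P,\phi)$ where $P$ is a rainbow path with colouring $\phi$, I would take $P=v_1v_2\cdots v_k$ of maximum length. Then $k-1\le n-1$ colours are used and at least one colour $c\notin\phi(E(P))$ is free.

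The first step is to show that $P$ can be closed into a rainbow cycle using a free colour. By maximality, every $G_c$-neighbour of $v_1$ and of $v_k$ lies on $P$, for every unused colour $c$. Since $d_{G_c}(v_1)+d_{G_c}(v_k)\ge n\ge k$, the standard Ore/Pósa crossing argument applies. Define $A=\{i: v_1v_{i+1}\in E(G_c)\}$ and $B=\{i: v_iv_k\in E(G_{c'})\}$ for two colours $c,c'$. Pigeonhole gives an index in $A\cap B$, producing the cycle $v_1v_{i+1}\cdots v_kv_i\cdots v_1$. This cycle needs two new colours, but only one edge $v_iv_{i+1}$ is removed. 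To handle the colour bookkeeping, I would reassign: the removed edge frees colour $\phi(v_iv_{i+1})$. When $k<n$, there are $n-k+1\ge 2$ free colours, so the count works. If $k=n$, there is exactly one free colour, so I use the freed colour of the deleted edge as the second colour. This requires the crossing to use colour $c$ at $v_1$ and colour $\phi(v_iv_{i+1})$ at $v_k$. I would handle it by a switching argument: consider $v_1$'s neighbourhood in $G_c$ and, for each $i$, whether $v_iv_k\in G_{\phi(v_iv_{i+1})}$. A counting of degrees summed over colours, averaged, yields the needed coincidence.

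The second step is to show that a rainbow cycle $C$ of length $k<n$ extends. Since $G$-degrees are at least $n/2$, any vertex $w\notin C$ has, in a free colour, a neighbour on $C$ (connectivity of each $G_i$ follows from $\delta\ge n/2$). Opening $C$ at that neighbour and appending $w$ gives a rainbow path of length $k$, contradicting maximality. This shows the maximal path has $n$ vertices, and the first step then closes it into a rainbow Hamiltonian cycle.

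The main obstacle is the closing step at $k=n$, where the colours are tight. My first attempt would be to choose, among maximum paths, one maximizing a secondary potential, namely the number of colours $c$ with $v_1v_k\in G_c$ together with the number of rotations available. I would then use Pósa rotations with recolouring: rotating at $v_1$ via an edge $v_1v_{i+1}$ coloured with the free colour frees $\phi(v_iv_{i+1})$, giving a new endpoint $v_i$ and a new free colour. Iterating, the set of reachable endpoint–free-colour pairs is large (at least $n/2$ endpoints, by the degree bound). A final pigeonhole between the endpoints reachable at the two ends, each coupled with its free colour, should produce the closing edge.
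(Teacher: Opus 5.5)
The paper gives no proof of this statement; it quotes it from Joos and Kim \cite{joos2020}. Your proposal therefore has to stand on its own.

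Your reduction to a rainbow Hamiltonian path is sound. For $k<n$ there are $n-k+1\ge 2$ unused colours, and the crossing with two distinct unused colours closes $P$ into a rainbow $C_k$. Since $k\ge n/2+1$, every vertex off that cycle has a neighbour on it in an unused colour, which contradicts maximality.

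The remaining step is closing a rainbow Hamiltonian path $P=v_1\cdots v_n$ when exactly one colour $c$ is free. This is the whole difficulty of the theorem, and the proposal does not prove it.

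The averaging you suggest cannot work. A single exchange at index $i$, keeping the colours of the other $n-2$ edges, needs $v_1v_{i+1}$ and $v_iv_n$ to be covered by $\{c,\phi(v_iv_{i+1})\}$. The condition $v_iv_n\in E(G_{\phi(v_iv_{i+1})})$ refers to a different graph for each $i$, so no degree bound controls it. Here is a concrete example for $n\ge 4$:
\begin{itemize}
\item let $G_c=K_n-v_1v_n$;
\item let $G_{\phi(v_iv_{i+1})}$ be $K_n$ minus whichever of $v_iv_n$, $v_1v_{i+1}$ are not edges of $P$.
\end{itemize}
Every graph then has minimum degree $n-2\ge n/2$, and $P$ is still rainbow under $\phi$. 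Yet no such exchange exists for any $i$, even though this nearly complete family certainly has rainbow Hamiltonian cycles. So any proof must recolour or reroute $P$, not just count over a fixed coloured path.

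The rotation paragraph does not supply this either. After rotating at $v_1$ through $v_1v_{i+1}\in E(G_c)$, the free colour becomes $\phi(v_iv_{i+1})$. Both the path and the graph in which the next rotation or the closing edge must be found therefore change with $i$. The endpoints reachable from the two ends are not independent of each other, and they are not measured in a common graph. So there are no two sets of size $\ge n/2$ in a common ground set whose intersection would yield the closing edge.

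What is missing is an explicit extremal choice of path and colouring, together with a proof that it forces a closing edge in the colour that is free at that moment. Your ``secondary potential'' is only named, not used. As written, your argument establishes a rainbow Hamiltonian path, not a rainbow Hamiltonian cycle.
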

This achievement extends Dirac's theorem to the framework of transversals in graph families, laying the foundation for subsequent research. Just as Bondy generalized Dirac's theorem, Li et al. \cite{li2024} further considered the pancyclicity of a  family of graphs under the same conditions of Theorem \ref{Joos-Hamiltonian}, obtaining the following result:
\begin{thm}[\cite{li2024}]\label{minimum-degree}
Suppose $\mathcal{G}=\{G_{i}: i\in [n]\}$ is a family of $n$-vertex graphs with the same vertex set $V$, and the graphs in the family can be identical. If $\delta(G_{i})\geq n/2$ for each $i\in [n]$, then $\mathcal{G}$ is rainbow pancyclic or  $G_1=\cdots=G_n=K_{n/2,n/2}$.
\end{thm}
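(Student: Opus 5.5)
The approach is the standard "rainbow cycle extension" scheme. Let $C$ be a rainbow cycle of length $\ell<n$. Either we find a rainbow cycle of length $\ell+1$, or we derive degree-sum constraints forcing a bipartite structure, and then use the rigidity of the extremal case. The starting point is Theorem~\ref{Joos-Hamiltonian}, which gives a rainbow Hamiltonian cycle $C=v_1v_2\cdots v_nv_1$ with colour assignment $\phi(v_iv_{i+1})=i$. We may assume some $G_j\neq K_{n/2,n/2}$; otherwise we are in the exceptional case.

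First, we would find a rainbow triangle, or a rainbow cycle of length $n-1$, using the Bondy-type chord counting along $C$. Fix a colour $c$ and suppose $\mathcal{G}$ has no rainbow $C_{n-1}$. For each vertex $v_i$, consider the chords $v_iv_j\in E(G_c)$. If $v_iv_j\in E(G_c)$ and $v_{i+1}v_{j+1}\in E(G_{c'})$, with $c,c'$ distinct from the colours still used, we get a rainbow cycle obtained by deleting two path edges. Here we can reuse the colours $i$ and $j$ freed from $v_iv_{i+1}$ and $v_jv_{j+1}$. This is why $G_c$ can be taken among $G_i,G_j$ and their chords.

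The usual double-counting from Bondy's proof then says the following. If neither $v_iv_j$ (in $G_i$) nor $v_{i-1}v_{j-1}$ (in $G_j$) closes a shorter cycle of the desired length, the neighbourhoods shifted along $C$ are disjoint. This gives $d_{G_i}(v_1)+d_{G_j}(v_k)\le n$ for the relevant nonadjacent pairs. Combined with $\sigma(\mathcal{G})\ge n$, equality holds everywhere, and we obtain the classical structural conclusion: the edges of every $G_p$ alternate, i.e.\ every $G_p$ is a subgraph of a balanced bipartite graph with parts $\{v_{\rm odd}\}$ and $\{v_{\rm even}\}$, with degree exactly $n/2$. Hence $G_p=K_{n/2,n/2}$ for all $p$, a contradiction.

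To realise all lengths $\ell\in[3,n]$ we would use the rainbow analogue of Hakimi–Schmeichel style descent. Given a rainbow $\ell$-cycle plus the $n-\ell$ unused colours, we form an auxiliary graph using the spare colours. The Ore condition holds for every choice of $p,q$, so we may choose whichever spare colour we like for each chord. This flexibility is what replaces the single graph in Theorem~\ref{Ore-pancyclic}. We would then show that a rainbow $\ell$-cycle yields a rainbow $(\ell-1)$-cycle unless the structure is bipartite. In the bipartite case we switch to the even/odd argument above, which produces the exceptional family.

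The main obstacle is the colour bookkeeping. When a chord replaces two cycle edges, we must guarantee that the chord's colour is not already used on the new cycle. The condition $\sigma(\mathcal{G})\ge n$ lets us pick $p$ and $q$ independently. A chord is usable unless, for every spare colour $p$, it is absent from $G_p$. So the key step is to show that counting degrees in two suitably chosen spare colours still gives the sum $\ge n$ needed for the pigeonhole.

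A second difficulty is that non-adjacency is required only in some $G_i$. When a pair is adjacent in all graphs there is no degree constraint, but then the edge itself is available in every colour. We handle this by splitting into the case where the pair is adjacent in all graphs, where we use the edge directly, and the case where it is not, where we apply the degree sum.
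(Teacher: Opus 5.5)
Your proposal has a genuine gap at the step meant to produce all lengths. You claim that a rainbow $\ell$-cycle yields a rainbow $(\ell-1)$-cycle ``unless the structure is bipartite'', and that the even/odd argument then gives $G_1=\cdots=G_n=K_{n/2,n/2}$. That rigidity argument only has global force when the cycle is Hamiltonian, because only then does the alternation along the cycle describe all of $V$. For $\ell<n$, failing to shorten one particular cycle by one is a local obstruction and says nothing about the whole family.

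For example, let $G_1=\cdots=G_n=K_{n/2,n/2}+e$ with $e$ inside one part and $n\ge 8$. Every odd cycle uses $e$. So a rainbow $4$-cycle (or any even cycle) missing both ends of $e$ cannot be shortened by one, either by skipping a vertex via chords or by rerouting through a single outside vertex. The structure around it is bipartite, yet the family is not exceptional. On a non-Hamiltonian cycle, the Bondy-type count naturally gives a dichotomy instead: crossing chords give an $(\ell-2)$-cycle, and otherwise an outside common neighbour gives an $(\ell-1)$-cycle. It does not give a one-step descent.

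Your $(n-1)$ step is also misdescribed. Replacing $v_iv_{i+1},v_jv_{j+1}$ by $v_iv_j,v_{i+1}v_{j+1}$ returns a Hamiltonian cycle, not a shorter one. To drop a vertex you must delete three edges and add two chords, as in $x_{i-1}x_a\overleftarrow{C_n}x_{i+1}x_{a+1}\overrightarrow{C_n}x_{i-1}$. The resulting tight count only constrains colours $i-1,i$ at position $i$, since a Hamiltonian cycle leaves no spare colours. So concluding that every $G_p$ alternates needs a colour-transfer argument, which you do not supply.

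The paper quotes this statement from \cite{li2024}, but it is contained in Corollary~\ref{pancyclicity-our}, since $\delta(G_i)\ge n/2$ for all $i$ gives $\sigma(\mathcal{G})\ge n$. The route there avoids your problem by descending by two. Claim~\ref{fundamental-claim} shows, with no exceptional case, that a rainbow $\ell$-cycle through $x_1$ with $8\le\ell\le n$ gives a rainbow $(\ell-2)$-cycle through $x_1$. Assuming none exists, the dichotomy above first yields $x_1zx_4\overrightarrow{C_\ell}x_1$. A second bypass $x_5wx_8$ (or $x_1zx_5\overrightarrow{C_\ell}x_1$ if $w=z$) then gives length $\ell-2$, with chords coloured by freed or unused colours.

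Because this descent preserves parity, two seeds suffice: the rainbow $C_n$ from Theorem~\ref{Hamiltonian-cycle}, and a rainbow $C_{n-1}$ from Lemma~\ref{[n-1]-vertex-pancyclicity}. That lemma is the only place the $K_{n/2,n/2}$ rigidity is derived, and there the cycle is Hamiltonian. Even so, it first shows that every edge of $C_n$ is strong, by recolouring $C_n$. It also builds rainbow $(n-2)$-cycles avoiding two prescribed colours.

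Lengths $5$ and $4$ are handled by hand (Claims~\ref{fundamental-claim-7-5} and~\ref{4-cycle}). The triangle is found globally. If every edge is strong, all $G_i$ coincide and Theorem~\ref{Ore-pancyclic} applies. Otherwise take $xy\in E(G_b)\setminus E(G_a)$ and $c\notin\{a,b\}$. Then $d_a(x)+d_c(y)\ge n$ gives a common neighbour $z$, and $xyz$ is a rainbow triangle.
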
     

\begin{thm}[\cite{li2023rainbow}]\label{panconnected}
Suppose $\mathcal{G}=\{G_{i}: i\in [n]\}$ is a family of $n$-vertex graphs with the same vertex set $V$, and the graphs in the collection can be identical. If $\delta(G_{i})\geq n/2+1$ for each $i\in [n]$, then $\mathcal{G}$ is rainbow panconnected.
\end{thm}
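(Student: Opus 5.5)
We argue by induction on the length. Fix $x,y\in V$. First we bound $d_{\mathcal G}(x,y)$. For any two colours $i\neq j$, $|N_{G_i}(x)|+|N_{G_j}(y)|\geq n+2$, so $N_{G_i}(x)\cap N_{G_j}(y)$ contains at least two vertices other than $x,y$. Hence a rainbow $x$–$y$ path of length $2$ always exists, and $d_{\mathcal G}(x,y)\in\{1,2\}$. If $xy\in E(G_i)$ for some $i$, length $1$ is realised trivially. So it suffices to show the following: whenever $P$ is a rainbow $x$–$y$ path of length $k$ with $2\le k\le n-2$, there is a rainbow $x$–$y$ path of length $k+1$. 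We will always assume the colours used by $P$ are exactly the set $C$ with $|C|=k$. The set of free colours $F=[n]\setminus C$ then has $|F|=n-k\ge 2$.

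\textbf{Step 1 (insertion).} Take a vertex $w\notin V(P)$ and two free colours $a\neq b\in F$. If $w$ has neighbours $u$ in $G_a$ and $v$ in $G_b$ that are consecutive on $P$, we replace the edge $uv$ by $uw,wv$. This gives a rainbow $x$–$y$ path of length $k+1$; the old colour of $uv$ becomes free again, which does no harm. When $k$ is large, $P$ has $k+1$ vertices, and $|N_{G_a}(w)\cap V(P)|+|N_{G_b}(w)\cap V(P)|\ge n+2-2(n-k-2)$. A pigeonhole argument over the $k$ edges of $P$ then yields consecutive $u,v$ of the required kind. This is the same counting as in the proof of Ore's theorem, and we can also vary the choice of $a,b$ among the free colours.

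\textbf{Step 2 (outside extension).} Suppose insertion fails for every $w\notin V(P)$. Then each outside vertex $w$ has many neighbours in $V\setminus V(P)$ in every free colour. We use this to extend $P$ through an outside detour: find an edge $uv$ of $P$ and an outside path $u\,w_1\,w_2\,v$ of length $3$ whose three edges get distinct free colours. Substituting it for $uv$ gives length $k+2$. We then shorten by one, either by bypassing a vertex via a chord of $P$ (minimum degree $n/2+1$ gives chords $u_{j-1}u_{j+1}$ in some colour), or by instead using a detour $u\,w\,v'$ whose endpoints are at distance $2$ on $P$. The degree bound $\delta\ge n/2+1$ makes the degree sums in the relevant colours exceed $n$ by at least $2$. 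A pigeonhole count over pairs ($P$-neighbour, outside neighbour) should force one of these configurations.

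\textbf{Step 3 (the last step, $k=n-2$).} Here $|F|=2$ and there is a single outside vertex $w$, so colour freedom is minimal. This is where I expect the main difficulty. The plan is as follows.
\begin{itemize}
\item Colour swapping: an edge of $P$ coloured $c$ may also lie in some $G_a$ with $a\in F$. Recolouring it to $a$ frees $c$, so we may choose among many pairs of free colours.
\item Fallback via Theorem~\ref{Joos-Hamiltonian}: add a new vertex $z$ joined to $x$ and $y$ in every graph, and consider the $(n+1)$-vertex family obtained by adding one extra colour class, the complete graph on $V\cup\{z\}$. Minimum degree $n/2+1\ge (n+1)/2$ is preserved except at $z$. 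After a small adjustment to handle $z$ (making its degree large within the auxiliary colours), a rainbow Hamiltonian cycle through $z$ yields a rainbow Hamiltonian $x$–$y$ path using $n-1$ distinct original colours.
\end{itemize}
The delicate point is ensuring the Hamiltonian cycle passes through $z$ using the two auxiliary edges $zx,zy$, and that these do not consume original colours. I would handle this by assigning $zx$ and $zy$ to two dedicated extra colours, and then checking that Theorem~\ref{Joos-Hamiltonian}'s proof, which is via absorption, tolerates this mild non-uniformity.
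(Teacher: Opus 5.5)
The paper only quotes this theorem from the literature and gives no proof of it, so I am judging your argument on its own. Your opening is fine: $N_{G_i}(x)\cap N_{G_j}(y)$ has at least two vertices, none equal to $x$ or $y$, so $d_{\mathcal G}(x,y)\le 2$. The reduction to a step $k\to k+1$ is also fine. But you have put the difficulty in the wrong place.

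\textbf{Steps 1 and 3.} Your Step~1 count gives $|A|+|B|\ge 2k+4-n$ for the two natural index sets inside $\{0,\dots,k-1\}$. This exceeds $k$ exactly when $k\ge n-3$. In particular $k=n-2$ is the \emph{easiest} case. There is a single outside vertex $w$, and all of its at least $n/2+1$ neighbours in $G_a$ and in $G_b$ (the two free colours) lie on $P=u_0\cdots u_{n-2}$. Hence $\{j\in[0,n-3]: u_jw\in E(G_a)\}$ and $\{j\in[0,n-3]: wu_{j+1}\in E(G_b)\}$ each have at least $n/2$ elements in a set of size $n-2$, so they intersect. Step~3 is therefore unnecessary. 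Its fallback would not work as stated anyway. The new vertex $z$ has degree $2$ in every original colour class, so Theorem~\ref{Joos-Hamiltonian} does not apply. Raising the degree of $z$ removes any reason for the rainbow Hamiltonian cycle to use $zx$ and $zy$. ``Checking that the absorption proof tolerates this'' amounts to reproving that theorem.

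\textbf{The actual gap: Step 2.} The genuine gap is the range $2\le k\le n-4$. Your induction cannot get past length $2$ without it, and there Step~2 is a plan rather than an argument.

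Failure of insertion does tell you that each outside vertex has at least two outside neighbours in each free colour. That alone does not produce a detour $u\,w_1\,w_2\,v$ with $uv$ an edge of $P$ and three distinct free colours. Once $k\ge n/2+1$, a vertex of $P$ may have all its neighbours on $P$. Failure of insertion says precisely that the $P$-neighbourhoods of outside vertices are badly placed, so you would have to actually count how $V\setminus V(P)$ attaches to consecutive pairs of $P$.

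The shortening half is worse, for two reasons.
\begin{itemize}
\item Your claim that $\delta\ge n/2+1$ yields chords $u_{j-1}u_{j+1}$ is false. Let every $G_i$ be $K_{n/2,n/2}$ plus a perfect matching inside each side, with $n\equiv 0 \pmod 4$. A path alternating sides, whose vertices at distance $2$ are never matching partners, has no such chord in any colour.
\item The alternative detour $u\,w\,v'$ between vertices at distance $2$ on $P$ replaces two edges by two edges, so it does not change the length at all.
\end{itemize}
``A pigeonhole count \dots\ should force one of these configurations'' is exactly the missing proof.

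What is needed is a real counting argument for this middle range that uses both the degree surplus and the outside set. Something in the spirit of the paper's Claim~\ref{fundamental-claim} could serve: there the surplus $n-\ell+1$ of $d_1(x_1)+d_3(x_4)$ over what $C_\ell$ can absorb forces a common neighbour outside the cycle. That argument would have to be adapted to paths with two fixed endpoints and to the colour bookkeeping.
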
 

It is natural to consider Ore's theorem in a family of graphs. 
Bradshaw \cite{bradshaw2021transversals} and Li, Li and Li \cite{li2023rainbow} proposed that it is also interesting to consider those conditions in a family of graphs.
\begin{conj}\label{Conj-Ore}
Let $n\in \mathbb{N}$ and $n\geq 3$. Suppose $\mathcal{G}=\{G_{i}: i\in [n]\}$ is a family of $n$-vertex graphs with the same vertex set, and the graphs in the family can be identical. If for each $i\in[n]$, $\sigma(G_{i})\geq n$, then there exists a rainbow Hamiltonian cycle in $\mathcal{G}$.
\end{conj}
This conjecture has not yet been solved. 
Recently, Liu, Chen, and Ma \cite{liu2025hamiltonian} defined the following stronger Ore-type condition for a graph family $\mathcal{G} = \{G_i:i\in[t]\}$ :
\[
\sigma(\mathcal{G}) = \min\{d_p(u) + d_q(v) \mid uv \notin E(G_i) \text{ for some } i \in [t] \text{ and for all } p,q \in [t]\}.
\]
Under this Ore-type condition, the authors \cite{liu2025hamiltonian} solved Conjecture \ref{Conj-Ore}:
\begin{thm}[\cite{liu2025hamiltonian}]\label{Hamiltonian-cycle}
Let $n\in \mathbb{N}$ and $n\geq 3$. Suppose $\mathcal{G}=\{G_{i}: i\in [n]\}$ is a family of $n$-vertex graphs with the same vertex set, and the graphs in the family can be identical.  If $\sigma(\mathcal{G}) \geq n$, then there exists a rainbow Hamiltonian cycle in $\mathcal{G}$.
\end{thm}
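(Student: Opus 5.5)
The plan is to run the classical Ore/Pósa crossing argument on a rainbow Hamiltonian path, adding a recolouring step to deal with the colours. I will first build a rainbow Hamiltonian path that uses $n-1$ colours, and then close it using the one colour left over. The definition of $\sigma(\mathcal{G})$ gives a very uniform hypothesis, which is what makes this workable. If $uv\notin E(G_i)$ for even one $i$, then $d_p(u)+d_q(v)\ge n$ for \emph{every} pair $p,q\in[n]$. So whenever a pair fails to be an edge in the colour we want, we may bound its degree sum in any colours we like.

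\textbf{Step 1 (rainbow Hamiltonian path).} Among all rainbow paths, choose $P=v_1\cdots v_k$ of maximum length, and subject to that, one whose set of unused colours is best for the endpoints. Every unused colour $c$ satisfies $N_{G_c}(v_1),N_{G_c}(v_k)\subseteq V(P)$. If $k<n$, take $w\notin V(P)$. Then $v_1w$ is a non-edge of some $G_c$, so $d_c(v_1)+d_c(w)\ge n$.

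Either $P$ closes to a rainbow cycle, or a crossing pair $v_1v_{j+1}\in G_a$, $v_jv_k\in G_b$ appears; this is the usual Ore counting on $P$. A rainbow cycle on $V(P)$ with a spare colour then extends to a longer path through any edge leaving $V(P)$, which is a contradiction. The colour bookkeeping is the same as in Step 2, so I would prove Step 1 by the same lemma, applied to $P$ in place of a Hamiltonian path.

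\textbf{Step 2 (closing).} Let $P=v_1\cdots v_n$ be a rainbow Hamiltonian path with edge $v_iv_{i+1}$ coloured $c_i$, and let $c$ be the free colour. If $v_1v_n\in G_c$ we are done. Otherwise $d_p(v_1)+d_q(v_n)\ge n$ for all $p,q$.

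For each $j$, the cycle $v_1v_{j+1}\cdots v_nv_j\cdots v_1$ drops the edge $v_jv_{j+1}$. Its two new edges can therefore be coloured with $\{c,c_j\}$. The cycle is rainbow if $v_1v_{j+1}\in G_c$ and $v_jv_n\in G_{c_j}$, or the same with $c$ and $c_j$ swapped. Set $A=\{j:v_1v_{j+1}\in G_c\}$, so $|A|=d_c(v_1)$.

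Suppose no $j\in A$ works. Then each $j\in A$ yields a rotated rainbow Hamiltonian path $P_j=v_j\cdots v_1v_{j+1}\cdots v_n$ with free colour $c_j$, and $v_jv_n\notin G_{c_j}$. Hence $d_p(v_j)+d_q(v_n)\ge n$ for all $p,q$, and every $v_j$ with $j\in A$ becomes a new "endpoint" carrying the full Ore condition. I would iterate these Pósa rotations to build a large endpoint set $E$ in which every vertex has degree sum at least $n$ with $v_n$ in all colours. I would then count edges of a single fixed colour (say $c$, using $d_c(v_1)+d_c(v_n)\ge n$ repeatedly) between $E$ and the successor/predecessor sets along the relevant paths, in order to force a crossing pair with compatible colours. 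Alternatively, the counting may force $v_n$ to be adjacent in $G_c$ to some $v_j$, which closes $P_j$ directly.

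\textbf{Main obstacle.} The hard step is the colour-compatibility in Step 2. The plain Ore count gives some $j$ with $v_1v_{j+1}\in G_c$ and $v_jv_n\in G_q$ for a \emph{fixed} $q$, but we need $q=c_j$, which varies with $j$. My first attempt is to use the "for all $p,q$" strength to choose $q$ adaptively. For each $j$, compare $\{v_{j}:v_jv_n\in G_{c_j}\}$ against the set $\{v_j:v_jv_n\in G_c\}$ bounded by $d_c(v_n)$, and show these coincide on the relevant indices unless the whole family is nearly identical. If that fails, I would fall back to an extremal choice: among all rainbow Hamiltonian paths, pick $(P,c)$ maximising $d_c(v_1)+d_c(v_n)$, or the number of colours $i$ with $v_1v_n\in G_i$. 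I would then show that any failure of compatibility produces a rotation $P_j$ that strictly improves this parameter.
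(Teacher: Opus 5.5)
\textbf{There is a genuine gap in Step 2, and it is the whole difficulty of the theorem rather than bookkeeping.} Your Ore count with $d_c(v_1)+d_q(v_n)\ge n$ only gives, for each \emph{fixed} $q$, an index $j$ with $v_1v_{j+1}\in G_c$ and $v_jv_n\in G_q$. What you need is $q=c_j$. The set $\{j: v_jv_n\in G_{c_j}\}$ is not the neighbourhood of $v_n$ in any single graph, so the hypothesis gives no lower bound on its size.

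In fact a single rainbow Hamiltonian path can fail to close for every $j$, even under the much stronger hypothesis $\delta(G_i)\ge n/2$. Take $n=4$ and $P=v_1v_2v_3v_4$ with $v_iv_{i+1}\in G_i$ and free colour $4$. Let $G_1=G_3=\{v_1v_2,v_1v_3,v_2v_4,v_3v_4\}$, let $G_2$ be the $4$-cycle $v_1v_2v_3v_4v_1$, and let $G_4=K_4-v_1v_4$. None of $j=1,2,3$ closes $P$. Yet $v_1v_2v_4v_3v_1$ with colours $2,1,3,4$ is a rainbow Hamiltonian cycle. So the proof must re-choose the path, and you never give an argument that this process terminates.

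Your rotation scheme has no Pósa-type invariant:
\begin{itemize}
\item After rotating, $P_j$ has free colour $c_j$, while $c$ is already used on $v_1v_{j+1}$. Counting edges of ``the fixed colour $c$'' at the new endpoints therefore yields no rainbow cycle unless that edge is removed again.
\item The colour needed for closing changes from path to path. So there is no analogue of Pósa's property that the neighbourhood of the endpoint set lies in its predecessor and successor sets.
\item Neither fallback potential, $d_c(v_1)+d_c(v_n)$ or the number of $i$ with $v_1v_n\in G_i$, is shown to increase under any rotation.
\item ``Show these sets coincide unless the family is nearly identical'' is a hope, not a step.
\end{itemize}

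\textbf{Why a substantial new idea is needed.} If $\delta(G_i)\ge n/2$ for all $i$, then $d_p(u)+d_q(v)\ge n$ for every pair and all $p,q$, so $\sigma(\mathcal{G})\ge n$. Hence the statement contains the Joos--Kim theorem (Theorem~\ref{Joos-Hamiltonian}), and your one-free-colour Step 2 is exactly its crux. The paper does not prove the statement; it imports it from \cite{liu2025hamiltonian}.

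\textbf{Step 1 is fine and does not need Step 2.} Take a longest rainbow path $v_1\cdots v_k$ with $3\le k<n$. There are at least two unused colours $p\neq q$, and $N_p(v_1),N_q(v_k)\subseteq V(P)$. If $v_1v_k$ lies in no unused colour, then $d_p(v_1)+d_q(v_k)\ge n>k-1$ forces a crossing coloured by two \emph{distinct} free colours. This gives a rainbow cycle $C$ on $V(P)$. Now let $s$ be a spare colour of $C$. If no $G_s$-edge left $V(C)$, then any $u\in V(C)$ and $w\notin V(C)$ would have $uw\notin G_s$, yet $d_s(u)+d_s(w)\le (k-1)+(n-k-1)<n$, a contradiction. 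So the cycle extends to a longer rainbow path.

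Having two free colours is exactly the device the paper uses in its own arguments: the cycles $C_{st}(i)$ of Claim~\ref{lem-claim-3} avoid two colours. A rainbow Hamiltonian path leaves only one, which is why rotations plus Ore counting do not suffice for the closing step.
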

Theorem \ref{Hamiltonian-cycle} is also a generalization of Ore's theorem in graph families. It gives a partial affirmative answer to Conjecture \ref{Conj-Ore}. For more results about this topic, please refer to \cite{aharoni2019large, chakraborti2024, cheng2023rainbow, cheng2025, ferber2022dirac, montgomery2020transversal}.
Inspired by the meta-conjecture and Theorem \ref{minimum-degree}, we continue to consider the pancyclicity of the family of graphs under the Ore-type condition. In this paper, we prove the following result:
\begin{thm}\label{[4,n]-vertex-pancyclicity}
    Let $\mathcal{G}=\{G_1,G_2,...,G_n\}$ be a family of $n$-vertex graphs on the same vertex set $V$. If $\sigma(\mathcal{G})\geq n$, then $\mathcal{G}$ is $[4,n]$-rainbow vertex-pancyclic or $G_1=\cdots=G_n=K_{\frac{n}{2},\frac{n}{2}}$.
\end{thm}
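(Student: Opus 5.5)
The plan is to use the rainbow Hamiltonian cycle supplied by Theorem \ref{Hamiltonian-cycle} and run a rainbow version of Bondy's chord-counting argument from Theorem \ref{Ore-pancyclic}. The argument will be anchored at an arbitrary vertex $v$ and will decrease the cycle length one step at a time.

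First I note what $\sigma(\mathcal{G})\ge n$ gives. If some pair $uv$ is a non-edge in some $G_i$, then $d_p(u)+d_q(v)\ge n$ for all $p,q$. So either a pair is an edge of every $G_i$ (a "common edge"), or the degree sum holds uniformly across colours. In particular, for each $p$, the graph $G_p$ satisfies $\sigma(G_p)\ge n$.

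Fix $v\in V$ and $\ell\in[4,n]$. Take a rainbow Hamiltonian cycle $C=v_0v_1\cdots v_{n-1}v_0$ with $v_0=v$, so the case $\ell=n$ is done. I then proceed by downward induction. Suppose $C'$ is a rainbow cycle of length $m\ge5$ through $v$; I want a rainbow cycle of length $m-1$ through $v$.

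Since exactly $m$ of the $n$ colours are used on $C'$, there are $n-m$ spare colours. Edges of $C'$ may also be recoloured, because we only need some injection $\phi$. The basic move is to find $x_i,x_{i+2}$ on $C'$ with $x_{i+1}\ne v$ and $x_ix_{i+2}\in E(G_c)$ for a usable colour $c$. This shortcut keeps $v$ and drops the length by one. More generally, two "crossing" chords $x_ix_{j+1}$ and $x_{i+1}x_j$ at distance pattern $j-i=\dots$ give length $m-1$, as in Bondy's proof.

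To force such chords I follow Bondy's strategy. Pick two non-adjacent vertices $x,y$ on $C$ (relative to some $G_i$; if all pairs are common edges, $G_i=K_n$ for all $i$ and everything is trivial). Then count over every colour $p$: $d_p(x)+d_p(y)\ge n$. In the standard argument this forces a pair $x_j,x_{j+1}$ with $xx_{j+1},yx_j$ in suitable positions, unless the graph is bipartite-balanced. Applying this with the colours $p$ ranging over the unused colours plus one freed colour gives a rainbow assignment.

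For vertex-pancyclicity I intend to run the length reduction on the Hamiltonian cycle directly. I look for a chord $v_iv_{i+\ell-1}$ (or two crossing chords) yielding a cycle of length $\ell$ that contains $v_0$. Using the degree sums of $v_0$'s neighbours on $C$ in many colours, a counting argument in the style of Bondy should give, for each $\ell$, an index $i$ with $v_0$ on the short arc. The rainbow condition is then handled by Hall-type matching: the chord(s) need colours in which they are present, and there are $n-\ell+1$ freed colours available. Since each chord lies in many $G_p$ by the degree-sum counting, a greedy choice suffices.

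The main obstacle is the extremal case. Here the counting is tight, which should force every $G_p$ to be bipartite with parts of size $n/2$, and the degree sums force completeness. I would show that if the counting fails for some $v,\ell$, then, for each colour, neighbourhoods along $C$ alternate. This gives $G_p\subseteq K_{n/2,n/2}$ with degree sum $n$, hence $G_p=K_{n/2,n/2}$, all with the same bipartition determined by $C$. The restriction to $\ell\ge4$ (excluding triangles) is exactly what avoids needing to treat odd-cycle obstructions at $\ell=3$ through a fixed vertex. I expect the delicate parts to be the small cases $\ell=4,5$ and the mixing of common edges with coloured edges.
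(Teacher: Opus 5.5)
Your plan has a genuine gap at its core. The one-step downward induction, ``from a rainbow $m$-cycle $C'$ through $v$, produce a rainbow $(m-1)$-cycle through $v$'', is false with the moves you allow (shortcuts and crossing chords of $C'$), and not only in the extremal family. Take $n\ge 10$ even and $G_1=\cdots=G_n=H$, where $H$ is $K_{\frac{n}{2},\frac{n}{2}}$ with parts $A,B$ plus one edge $a_1a_2$ inside $A$. Then $\sigma(\mathcal{G})\ge n$ and the family is not exceptional; indeed every $v\in B$ lies on rainbow odd cycles, all of which use $a_1a_2$. Now take $v\in B$ and an even cycle $C'$ through $v$ of length $m\in[6,n-4]$ avoiding $a_1,a_2$. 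Since $H[V(C')]$ is bipartite, no shortcut $x_ix_{i+2}$ and no pair of chords of $C'$ can give length $m-1$.

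The inference ``counting fails, so neighbourhoods alternate, so $K_{\frac{n}{2},\frac{n}{2}}$'' also breaks here, because alternation along a non-spanning cycle says nothing about vertices off it. On the Hamiltonian cycle, you likewise give no reason why failure at some $\ell<n-1$ should force alternation.

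The paper handles parity structurally. It isolates the exceptional family in the single step $n\to n-1$ (Lemma \ref{[n-1]-vertex-pancyclicity}), where the cycle is spanning. All later reductions go down by \emph{two} (Claims \ref{x-(n-2)-cycle} and \ref{fundamental-claim}), which preserves parity, with the odd and even chains seeded by $C_n$ and the rainbow $(n-1)$-cycle. Even that one extremal step needs much more than alternation, because there are no spare colours, only the two freed by a shortcut. The paper first shows, in order:
\begin{itemize}
\item every edge of $C_n$ is strong, by shifting colours along $C_n$;
\item $x_ix_{i+2}$ lies in no $G_j$;
\item for any two colours there is a rainbow $(n-2)$-cycle missing $x_{i+1},x_{i+2}$ and avoiding both colours;
\item $d_a(x_i)+d_b(x_i)\ge n$ for distinct $a,b$.
\end{itemize}
Only then can it read off the alternation.

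The counting mechanism itself is also missing. The Ore-type condition only constrains pairs that are non-edges of some $G_i$. Bondy's argument is a global edge count obtained by summing over consecutive (adjacent) pairs of the Hamiltonian cycle, so it cannot locate a cycle through a prescribed vertex. You never say which non-adjacent pair you count at. The paper's device is to force non-adjacency from the absence of the target cycle. In Claim \ref{fundamental-claim}, $x_1x_4$ lies in none of $G_1,G_2,G_3,G_{\ell+1},\dots,G_n$, since otherwise $x_1x_4\overrightarrow{C_\ell}x_1$ is already the desired cycle. This gives $d_1(x_1)+d_3(x_4)\ge n$ with two \emph{distinct, freed} colours. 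Once the direct shortcuts $x_1x_3\in E(G_1)$ and $x_2x_4\in E(G_3)$ are excluded, the crossing-chord count caps the contribution inside $C_\ell$ at $\ell-1$. The excess $n-\ell+1$, spread over only $n-\ell$ outside vertices, then forces a common outside neighbour. The same argument is repeated at $x_5,x_8$ using the spare colour $n$.

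Rainbowness is thus built in by fixing the colours before counting. Your claim that ``each chord lies in many $G_p$ by the degree-sum counting, so a greedy choice suffices'' has no basis. Degree sums produce chords in prescribed colours and say nothing about how many colours a given chord carries. Counting a single-colour sum $d_p(x)+d_p(y)$ yields two chords of the same colour, which is not rainbow.

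Finally, lengths $5$ and $4$ and small $n$ need separate arguments, which your sketch only flags. The paper works from a rainbow $7$-cycle with spare colour $8$, and from a rainbow $5$-cycle with spare colours $6,7$.
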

From Theorem \ref{[4,n]-vertex-pancyclicity}, we obtain the following corollary, which supports the meta-conjecture.

\begin{cor}\label{pancyclicity-our}
Let $n\in \mathbb{N}$ and $n\geq 3$. Suppose $\mathcal{G}=\{G_{i}: i\in [n]\}$ is a family of $n$-vertex graphs with the same vertex set, and the graphs in the family can be identical. If $\sigma(\mathcal{G})\geq n$, then $\mathcal{G}$ is rainbow pancyclic or $G_1=\cdots=G_n=K_{\frac{n}{2},\frac{n}{2}}$.
\end{cor}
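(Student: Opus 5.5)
The plan is to read off all cycle lengths from $4$ to $n$ directly from Theorem~\ref{[4,n]-vertex-pancyclicity}, and to treat the two remaining issues separately: the degenerate case $n=3$, and the existence of a rainbow triangle when $n\ge 4$.

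\emph{Lengths $4$ to $n$, and the case $n=3$.} Apply Theorem~\ref{[4,n]-vertex-pancyclicity}. If $G_1=\cdots=G_n=K_{\frac n2,\frac n2}$, we are in the exceptional case and there is nothing to prove. Otherwise $\mathcal G$ is $[4,n]$-rainbow vertex-pancyclic, so it contains a rainbow cycle of every length $\ell\in[4,n]$. When $n=3$ the interval $[4,3]$ is empty, and the only length needed is $3=n$. A rainbow triangle is then supplied directly by Theorem~\ref{Hamiltonian-cycle}, since a Hamiltonian cycle on three vertices is a triangle. Hence it remains only to produce a rainbow triangle when $n\ge 4$ and the family is not the exceptional one.

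\emph{Rainbow triangle for $n\ge4$.} I would first observe that $\sigma(\mathcal G)\ge n$ implies $\sigma(G_p)\ge n$ for every $p$: if $uv\notin E(G_p)$, take $i=p$ and $q=p$ in the definition. So by Theorem~\ref{Ore-pancyclic}, each $G_p$ is pancyclic or equal to $K_{\frac n2,\frac n2}$.

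The cases then split as follows.
\begin{itemize}
\item If every $G_p$ is complete, any triangle can be coloured with three distinct indices, since $n\ge 3$.
\item Otherwise fix a pair $uv$ that is a non-edge of some $G_i$. Then $d_p(u)+d_q(v)\ge n$ for \emph{all} $p,q$. This cross-graph degree condition is the extra leverage over the single-graph setting.
\end{itemize}
For a triangle $xyz$ in the union graph, write $S(e)=\{j: e\in E(G_j)\}$ for each edge $e$. The triangle is rainbow exactly when the three sets $S(xy),S(yz),S(xz)$ have a system of distinct representatives. By Hall's theorem, this fails only if either some edge has $S(e)=\emptyset$, or two edges share a single common colour-set $\{j\}$, or all three edges lie in the same two graphs. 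I would take a triangle in some non-bipartite $G_p$. If Hall fails for it, I would use the cross condition $d_p(u)+d_q(v)\ge n$ with $q\ne p$ to locate a common neighbour, or a neighbour in $G_q$ adjacent (in $G_p$) to a neighbour in $G_p$. This yields a second triangle whose edges use a different graph, and repeating with a third index gives three distinct colours.

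The main obstacle is this Hall-type repair step, namely ruling out that all triangles of the union are confined to at most two graphs. Note that if even one $G_p$ differs from $K_{\frac n2,\frac n2}$, the remaining $G_q$ could still be bipartite. I would first try to handle this by pigeonhole on the degree sums. Since $d_p(u)+d_q(v)\ge n$ for all pairs $(p,q)$, for fixed $x$ the sets $N_q(x)$ for $q\ne p$ must meet the neighbourhood of $x$ in $G_p$ in at least one edge of $G_p$, and this gives the missing third colour.
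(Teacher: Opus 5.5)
\paragraph{Assessment.} The reduction step is fine. Theorem~\ref{[4,n]-vertex-pancyclicity} gives all lengths in $[4,n]$ outside the exceptional case, Theorem~\ref{Hamiltonian-cycle} handles $n=3$, and $\sigma(\mathcal{G})\ge n$ does imply $\sigma(G_p)\ge n$ for each $p$.

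The gap is the rainbow triangle for $n\ge 4$, which your proposal never produces. There are three problems.
\begin{itemize}
\item The Hall-type repair loop is only sketched.
\item The closing pigeonhole claim is unsupported. The inequality $d_p(u)+d_q(v)\ge n$ is available only for pairs $u,v$ that are non-adjacent in some $G_i$. It says nothing in general about a fixed vertex $x$ and how $N_q(x)$ meets its $G_p$-neighbourhood.
\item Your starting point, ``take a triangle in some non-bipartite $G_p$'', does not always exist. Suppose every $G_p$ is a copy of $K_{\frac{n}{2},\frac{n}{2}}$ but not all with the same bipartition. Then $\sigma(\mathcal{G})=n$ and the family is not the exceptional one, yet your plan has nothing to work with.
\end{itemize}
Your case split (all $G_p$ complete versus not) is also the wrong dichotomy. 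The relevant question is whether all $G_p$ are identical.

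\paragraph{The missing idea.} Apply the family Ore condition to a \emph{non-strong edge} rather than to an arbitrary non-edge $uv$.
\begin{itemize}
\item If every edge of the union lies in every $G_i$, then $G_1=\cdots=G_n=G$ with $\sigma(G)\ge n$. Theorem~\ref{Ore-pancyclic} then gives either $G=K_{\frac{n}{2},\frac{n}{2}}$ or a triangle in $G$. Such a triangle is automatically rainbow, since its three edges lie in every $G_i$ and $n\ge 3$.
\item Otherwise there is an edge $xy\in E(G_b)\setminus E(G_a)$. Since $xy$ is a non-edge of $G_a$, the definition of $\sigma(\mathcal{G})$ gives $d_a(x)+d_c(y)\ge n$ for any $c\in[n]\setminus\{a,b\}$.
\item The sets $N_a(x)$ and $N_c(y)\setminus\{x\}$ both lie in $V\setminus\{x,y\}$, which has $n-2$ elements, and their sizes sum to at least $n-1$. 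So they share a vertex $z$.
\item Then $xyzx$ is a rainbow triangle, with $xy\in E(G_b)$, $xz\in E(G_a)$ and $yz\in E(G_c)$.
\end{itemize}
This one-step argument is the paper's proof; no Hall's theorem, iteration, or search for non-bipartite members is needed.
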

\begin{proof}
By Theorem \ref{[4,n]-vertex-pancyclicity}, it suffices to show that $\mathcal{G}$ contains a rainbow triangle or $G_1=\cdots=G_n=K_{\frac{n}{2},\frac{n}{2}}$. If every edge in $\mathcal{G}$ is a strong edge, then the result follows by Theorem \ref{Ore-pancyclic}. Otherwise, there is an edge $xy$ in $\mathcal{G}$ such that $xy\notin E(G_a)$ for some $a\in [n]$. Without loss of generality, assume that $xy\in E(G_b)$. Then $d_a(x)+d_c(y)\geq n$ for some $c \in [n]\setminus\{a,b\}$. It is easy to show that $N_a(x)\cap N_c(y)\neq\emptyset$. Choose a vertex $z\in N_a(x)\cap N_c(y)$, then $xyzx$ is a rainbow triangle in $\mathcal{G}$. The result follows. 
\end{proof}

In \cite{li2024}, the authors also proved the rainbow vertex-pancyclicity of $\mathcal{G}=\{G_{i}: i\in [n]\}$ when $\delta(G_{i})\geq \frac{n+1}{2}$ for each $i\in [n]$. 
\begin{thm}[\cite{li2024}]\label{vertex-pancyclicity-minimum-degree}
Suppose $\mathcal{G}=\{G_{i}: i\in [n]\}$ is a family of $n$-vertex graphs with the same vertex set $V$, and the graphs in the family can be identical. If $\delta(G_{i})\geq \frac{n+1}{2}$ for each $i\in [n]$, then $\mathcal{G}$ is rainbow vertex-pancyclic.
\end{thm}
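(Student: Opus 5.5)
The plan is to deduce this from Theorem \ref{[4,n]-vertex-pancyclicity} and to handle triangles separately by a direct counting argument.

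\textbf{Reducing lengths $\ell\in[4,n]$ to Theorem \ref{[4,n]-vertex-pancyclicity}.} Suppose $\delta(G_i)\ge \frac{n+1}{2}$ for every $i\in[n]$.
\begin{enumerate}
\item Take any pair $u,v$ with $uv\notin E(G_i)$ for some $i$. For all $p,q\in[n]$ we have $d_p(u)+d_q(v)\ge n+1$. Hence $\sigma(\mathcal{G})\ge n+1\ge n$; this holds vacuously if no such pair exists.
\item The exceptional family cannot occur. Every vertex of $K_{\frac n2,\frac n2}$ has degree $\frac n2<\frac{n+1}{2}$, so $G_1=\cdots=G_n=K_{\frac n2,\frac n2}$ is impossible.
\item Theorem \ref{[4,n]-vertex-pancyclicity} therefore shows that $\mathcal{G}$ is $[4,n]$-rainbow vertex-pancyclic.
\end{enumerate}

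\textbf{Triangles through a given vertex.} It remains to show that every vertex $v\in V$ lies in a rainbow triangle. Since $n\ge 3$, the colours $1,2,3$ are distinct.
\begin{enumerate}
\item Pick any $u\in N_1(v)$, which is nonempty since $d_1(v)\ge\frac{n+1}{2}>0$.
\item Consider $N_2(v)$ and $N_3(u)$. Both are subsets of $V$ and each has size at least $\frac{n+1}{2}$. Their sizes sum to at least $n+1>|V|$, so they share a vertex $w$.
\item Then $w\neq v$ because $v\notin N_2(v)$, and $w\ne u$ because $u\notin N_3(u)$.
\item So $vu\in E(G_1)$, $uw\in E(G_3)$ and $wv\in E(G_2)$. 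Thus $vuwv$ is a rainbow triangle containing $v$.
\end{enumerate}
Together with the previous part, this shows $\mathcal{G}$ is rainbow vertex-pancyclic.

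All of the difficulty lies in Theorem \ref{[4,n]-vertex-pancyclicity}, which we are taking as given. The only points needing care are checking that the strengthened degree bound rules out the bipartite exception, and that $\frac{n+1}{2}$ is exactly enough to make the two neighbourhoods in the triangle step intersect. With only $\frac n2$ the intersection could be empty, which is precisely what happens in $K_{\frac n2,\frac n2}$.
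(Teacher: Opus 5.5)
Your argument is correct. Note, though, that the paper gives no proof of this statement: it is quoted from \cite{li2024}. You instead recover it as a corollary of the paper's main result. This is legitimate and non-circular, because the proof of Theorem \ref{[4,n]-vertex-pancyclicity} rests only on Theorem \ref{Hamiltonian-cycle}, Lemma \ref{[n-1]-vertex-pancyclicity} and counting arguments. It never uses Theorem \ref{minimum-degree} or Theorem \ref{vertex-pancyclicity-minimum-degree}.

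Structurally, your proof mirrors the paper's proof of Theorem \ref{vertex-pancyclicity-ore} in Section 4: hand all lengths $\ell\in[4,n]$ to Theorem \ref{[4,n]-vertex-pancyclicity}, then treat triangles separately. The minimum-degree hypothesis makes both leftover tasks trivial.
\begin{itemize}
\item It rules out $K_{\frac n2,\frac n2}$ directly.
\item The triangle through $v$ comes from a one-line pigeonhole argument on $N_2(v)\cap N_3(u)$.
\end{itemize}
Under an Ore-type condition, by contrast, a single vertex may have small degree, and the paper needs $\sigma(\mathcal{G})\ge\frac{4n}{3}-1$ together with a separate argument.

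Theorem \ref{vertex-pancyclicity-ore} by itself would not yield the statement. The hypothesis $\delta(G_i)\ge\frac{n+1}{2}$ only gives $\sigma(\mathcal{G})\ge n+1$, which is below $\frac{4n}{3}-1$ once $n>6$. So combining Theorem \ref{[4,n]-vertex-pancyclicity} with your direct triangle count is the right way to extract the cited result from this paper. The cost is that your proof depends on the full weight of Theorem \ref{[4,n]-vertex-pancyclicity} (and of Theorem \ref{Hamiltonian-cycle}), whereas the paper simply treats the statement as an imported result.
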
  

\begin{figure}
    \centering
    \includegraphics[width=0.5\linewidth, trim={0.5cm 1.2cm 0.5cm 0.6cm},clip]{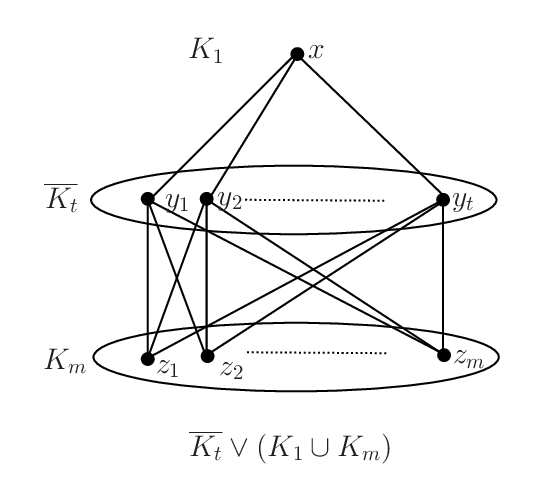}
    \caption{The extremal graph $\overline{K_t} \vee (K_1 \cup K_m)$}
    \label{fig}
\end{figure}

Similarly, we want to generalize Theorem \ref{vertex-pancyclicity-minimum-degree} when  $\sigma(\mathcal{G})\geq n+1$. The complement of $G$, denoted by $\overline{G}$, 
is the graph on the same vertex set $V$ in which two distinct vertices 
are adjacent if and only if they are non-adjacent in $G$.
However, let  $\mathcal{G}=\{G_1,G_2,...,G_n\}$ and $G_1 = \cdots = G_n = \overline{K_t} \vee (K_1 \cup K_m)$, where $t = \frac{n}{3} + \frac{2}{3}$, $m = \frac{2n}{3} - \frac{5}{3}$ and $n \equiv 1 \pmod{3}$. As shown in Figure \ref{fig}. It is not difficult to find that $x$ is not contained in a rainbow triangle in $\mathcal{G}$. Then the condition  $\sigma(\mathcal{G})\geq \frac{4n-4}{3}$  does not guarantee the rainbow vertex-pancyclicity of $\mathcal{G}$. Hence, we prove the following result:
\begin{thm}\label{vertex-pancyclicity-ore}
    Let $\mathcal{G}=\{G_i: i\in [n]\}$ be a family of $n$-vertex graphs on the same vertex set $V$. If $\sigma(\mathcal{G})\geq \frac{4n}{3}-1$, then $\mathcal{G}$ is rainbow vertex-pancyclic.
\end{thm}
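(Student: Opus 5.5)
Since $\frac{4n}{3}-1\geq n$ for $n\geq 3$, Theorem \ref{[4,n]-vertex-pancyclicity} already gives that $\mathcal{G}$ is $[4,n]$-rainbow vertex-pancyclic, unless $G_1=\cdots=G_n=K_{\frac{n}{2},\frac{n}{2}}$. That exceptional family is ruled out: two non-adjacent vertices on the same side have degree sum $n<\frac{4n}{3}-1$ once $n>3$, and for $n\le 3$ it is vacuous or easily checked. So the whole task is to show that every vertex $x\in V$ lies in a rainbow triangle.

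Fix $x$ and a color, say color $1$, with a neighbor $y$ of $x$ in $G_1$ (such a neighbor exists since $x$ has positive degree in every $G_i$ by the Ore condition). I would argue by cases on whether edges at $x$ are \emph{strong}, i.e.\ present in all $G_i$.

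\textbf{Case 1: $x$ is adjacent to every other vertex in every $G_i$.} Pick any $y$, then any edge $yz$ in some $G_c$ with $z\neq x$. Such an edge exists because $y$ has degree at least $2$ in some graph, since degrees are large: if $\deg\le 1$ everywhere, pairing $y$ with a non-neighbor violates the degree-sum bound. Then $xy$ and $xz$ can be colored with two distinct colors different from $c$, since $n\geq 3$.

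\textbf{Case 2: there is a vertex $v$ and an index $i$ with $xv\notin E(G_i)$.} Then $d_p(x)+d_q(v)\geq \frac{4n}{3}-1$ for all $p,q$. Hence $d_p(x)\geq \frac{n}{3}$ for every $p$, since $d_q(v)\le n-1$. I would now choose a color $a$ and $y\in N_a(x)$ and try to find $z\in N_b(x)\cap N_c(y)$ with $a,b,c$ distinct, as in the proof of Corollary \ref{pancyclicity-our}. If $xy$ is missing from some $G_j$, the hypothesis gives $d_b(x)+d_c(y)\geq \frac{4n}{3}-1>n$ for all $b,c$. The two neighborhoods therefore overlap in at least $\frac{n}{3}$ vertices, and the triangle follows directly, distinctness of colors aside.

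So the hard subcase, and the main obstacle, is when all edges at $x$ in the relevant graphs are strong but $x$ is not universal. Then $N(x):=N_p(x)$ is the same set $N$ for all $p$, with $|N|\ge \frac{n}{3}$. I need an edge $yz$ inside $N$ in some color. This is exactly where the extremal example $\overline{K_t}\vee(K_1\cup K_m)$ shows sharpness. I would argue by contradiction: suppose $N$ is independent in every $G_i$. Take $y,y'\in N$, non-adjacent in every $G_i$. The condition gives $d(y)+d(y')\geq \frac{4n}{3}-1$, but both neighborhoods lie in $V\setminus N$, of size at most $n-|N|$. Combining this with the bound $|N|\geq \frac{4n}{3}-1-(n-1)$ derived from a non-neighbor $v$ of $x$ gives $2(n-|N|)\ge \frac{4n}{3}-1$, so $|N|\le \frac{n}{3}+\frac12$.

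The remaining step is to refine this: use that $v\notin N$ has $d(v)\geq \frac{4n}{3}-1-|N|$, and count edges between $N$ and $V\setminus N$ to get a contradiction. If $|N|=1$, handle it separately, since then $d(x)=1$ forces $d(v)\geq \frac{4n}{3}-2>n-1$, which is impossible. Once some $yz\in E(G_c)$ with $y,z\in N$ exists, color $xy,xz$ with two further colors; this works because those edges are strong and $n\ge3$. The delicate arithmetic is near the threshold $|N|\approx n/3$, and I expect the proof to need careful handling of the integrality of degrees there.
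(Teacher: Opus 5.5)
\textbf{The proposal has a genuine gap: the final contradiction is never reached.} Your architecture is the paper's. You apply Theorem \ref{[4,n]-vertex-pancyclicity}, which excludes $K_{\frac n2,\frac n2}$ for $n\ge 4$. You dispose of a non-strong edge $xy$ at $x$ via a common neighbour in $N_b(x)\cap N_c(y)$. The overlap there is at least $\frac n3-1$, not $\frac n3$; this is still positive for $n\ge 4$, and $n=3$ forces every $G_i=K_3$. Then, with all edges at $x$ strong, you argue that $N$ cannot be independent in every $G_i$. But this last step, the only non-routine part, is never completed. Your two bounds are $|N|\ge \frac n3$ (from $d_q(v)\le n-1$) and $|N|\le \frac n3+\frac12$ (from two vertices of $N$). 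They are \emph{not} contradictory: for $n\equiv 0$ or $2\pmod 3$ the integer $|N|=\lceil n/3\rceil$ satisfies both (e.g.\ $|N|=2$ for $n=5$ or $n=6$). So integrality alone only helps when $n\equiv 1\pmod 3$, and the suggested count of edges between $N$ and $V\setminus N$ is left unspecified.

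\textbf{The missing observation is one line.} In this subcase every edge at $x$ is strong, so a vertex $v\notin N\cup\{x\}$ is non-adjacent to $x$ in \emph{every} $G_q$. Hence $d_q(v)\le n-2$, not merely $n-1$, and
\[
|N| = d_p(x) \ge \frac{4n}{3}-1-(n-2) = \frac n3+1 > \frac n3+\frac12 ,
\]
contradicting the upper bound. Such a $v$ exists: if $N=V\setminus\{x\}$, two vertices of $N$ would have degree sum $2<\frac{4n}{3}-1$. Also, $|N|\ge \frac n3+1\ge 2$ makes your separate $|N|=1$ case unnecessary; its inequality $\frac{4n}{3}-2>n-1$ fails at $n=3$ anyway. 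This is precisely the paper's closing inequality $d_i(x)+d_j(y)\le \frac{4n}{3}-\frac32$. When comparing, be aware that the paper's intermediate bounds $d_i(x)\le\frac n3-\frac12$ and $d_j(y)\le n-1$ are each off by one in compensating directions. The correct pair is $d_i(x)\le \frac n3+\frac12$ and $d_j(y)\le n-2$, which is exactly the fix above.
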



Here, we provide some notation and terminology.
In this paper, we follow standard graph-theoretic notation and terminology.  
A \emph{graph} $G = (V, E)$ consists of a finite set $V$ of \emph{vertices} and a set $E \subseteq \{uv \mid u, v \in V, u \neq v\}$ of \emph{edges}. All graphs considered are simple (without loops or multiple edges) and finite.
For a graph $G = (V, E)$ and a vertex $v \in V$, we use $N_G(v)$ to denote the \emph{neighborhood} of $v$ and $d_G(v)=|N_G(v)|$ to denote the \emph{degree} of $v$ in $G$. 
The \emph{minimum degree} of $G$ is defined $\delta(G)=\mbox{min}\{d_G(u): u\in V(G)\}$. 
Let $\mathcal{G} = \{G_i : i \in [n]\}$ be a family of $n$-vertex graphs with the same vertex set, and the graphs in the family can be identical.  For any two vertices $u$ and $v$,  
an edge $e=xy$ is called a \emph{strong edge} in $\mathcal{G}$ if $xy\in E(G_i)$ for each integer $i\in [n]$. A set of integers $c(e)$ is defined as $c(e) := \{ i \in [n] : e \in E(G_i) \}$.  Let $C_n$ be a cycle of length $n$ with vertices $x_1,x_2,\dots,x_n$ in cyclic order. 
We write $\overrightarrow{C_n}$ to denote the cycle traversed in the clockwise direction such as $x_1x_2\overrightarrow{C_n}x_nx_1$, and $\overleftarrow{C_n}$ to denote the same cycle traversed in the counterclockwise direction such as $x_1x_nx_{n-1}\overleftarrow{C_n}x_1$.
For basic concepts not defined here, we refer the reader to \cite{graph-theory}.



\section{Main Lemma}
To prove Theorem \ref{[4,n]-vertex-pancyclicity}, we first show that the following lemma on the existence of rainbow cycle of $n-1$ in the graph family $\mathcal{G}=\{G_1,G_2,...,G_n\}$.
\begin{lemma}\label{[n-1]-vertex-pancyclicity}
    Let $\mathcal{G}=\{G_1,G_2,...,G_n\}$ be a family of $n$-vertex graphs on the same vertex set $V$ with $\sigma(\mathcal{G})\geq n$. Then each vertex is contained in a rainbow cycle of length $n-1$ in $\mathcal{G}$ or $G_1=\cdots =G_n=K_{\frac{n}{2}, \frac{n}{2}}$.
\end{lemma}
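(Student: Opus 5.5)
We prove the lemma by a case split on whether $\mathcal{G}$ has a non-strong edge.

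\emph{Case 1: every edge is strong.} Then $G_1=\cdots=G_n=:G$ and $\sigma(G)\ge n$. Here we use the classical fact behind Theorem \ref{Ore-pancyclic}: under Ore's condition $G$ is vertex-pancyclic, except for the balanced complete bipartite graph and, for lengths $\ell<n$, some small exceptions. We prove the case $\ell=n-1$ directly. Fix a vertex $v$. Theorem \ref{Hamiltonian-cycle} (or Ore's theorem) gives a Hamiltonian cycle $C=x_1x_2\cdots x_nx_1$. Delete some $x_j\neq v$. If for some $j$ the edge $x_{j-1}x_{j+1}$ exists, we are done. Otherwise no vertex at distance two on $C$ is adjacent, so $d(x_{j-1})+d(x_{j+1})\ge n$ for every $j$. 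The standard crossing-chord argument then yields an $(n-1)$-cycle through $v$ unless $G$ is bipartite with parts of size $n/2$. In that situation Ore's condition forces $G=K_{n/2,n/2}$, which is the exceptional family.

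\emph{Case 2: some edge is not strong.} The idea is to build a rainbow Hamiltonian cycle with a reserved colour and then shorten it. Since $\mathcal{G}$ has $n$ colours, a rainbow $C_{n-1}$ uses only $n-1$ of them, which leaves one colour of slack.
\begin{enumerate}
\item Apply Theorem \ref{Hamiltonian-cycle} to obtain a rainbow Hamiltonian cycle $C=x_1\cdots x_nx_1$ with $\phi(x_ix_{i+1})$ distinct. Fix $v$ and aim to remove some $x_j\ne v$.
\item Removing $x_j$ frees the two colours $\phi(x_{j-1}x_j)$ and $\phi(x_jx_{j+1})$. If $x_{j-1}x_{j+1}$ lies in either of those graphs, we obtain a rainbow $C_{n-1}$ avoiding $x_j$. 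If this fails for every $j$, then for each $j$ there are colours $a$ with $x_{j-1}x_{j+1}\notin E(G_a)$.
\item For such a pair, the definition of $\sigma(\mathcal{G})$ gives $d_p(x_{j-1})+d_q(x_{j+1})\ge n$ for \emph{all} $p,q$. Choose $p,q$ to be the freed colours, or other suitable colours, and run the Ore-type counting along $C$. Define $S=\{i: x_{j-1}x_{i+1}\in E(G_p)\}$ and $T=\{i: x_{j+1}x_{i}\in E(G_q)\}$, suitably shifted. Pigeonhole produces a crossing pair of chords, and these give an $(n-1)$-cycle that skips one vertex, reroutes through two chords, and drops two cycle edges.
\item Recolour this cycle. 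The two chords receive the two colours freed by the dropped cycle edges and the removed vertex, which is exactly where the freedom to pick $p,q$ arbitrarily is used.
\item Handle separately the degenerate situation in which the chord configuration forces the skipped vertex to be $v$. There we rotate the choice of $j$ around $C$; since $n\ge 3$ there are at least two candidate positions, and at most one of them is bad.
\end{enumerate}

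The main obstacle is step 3 together with the colour bookkeeping in step 4. The two chords must come from colours that are genuinely unused in the new cycle, so the counting has to be performed in the graphs $G_p,G_q$ indexed by the freed colours. We also have to show that the count can reach $n$ without a crossing pair only when the neighbourhoods alternate along $C$, so that the configuration is bipartite. In that bipartite alternation case, every such pair is non-adjacent in all $G_i$. The condition $\sigma(\mathcal{G})\ge n$ then forces each such vertex to have degree exactly $n/2$ in each graph, which forces all $G_i=K_{n/2,n/2}$. This contradicts the existence of a non-strong edge unless we are in the exceptional case.
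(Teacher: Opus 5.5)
Your proposal has a genuine gap, and it sits exactly where you place the ``main obstacle''.

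\textbf{The count in Step 3 is tight.} The pigeonhole there does not produce a crossing pair. Take $p,q$ to be the two colours freed at $x_j$. Your sets satisfy $|S|\ge d_p(x_{j-1})-1$ and $|T|\ge d_q(x_{j+1})-1$, so $|S|+|T|\ge n-2$. But $S\cup T$ lies in a set of only $n-2$ admissible indices, so nothing is forced. This has to be so: $G_1=\cdots=G_n=K_{n/2,n/2}$ satisfies every hypothesis and has no $(n-1)$-cycle at all.

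The same tight count appears in your Case 1, so the ``standard crossing-chord argument'' does not close that case either. The whole content of the lemma is therefore the analysis of the tight case, and you only assert it (``the count can reach $n$ without a crossing pair only when the neighbourhoods alternate''). That assertion does not follow from the count, for three reasons.
\begin{itemize}
\item Tightness at a single $j$ only says that $S$ and $T$ partition the index set. This is a complementarity between $N_p(x_{j-1})$ and a shift of $N_q(x_{j+1})$ along $C$, not alternation.
\item It concerns only the two freed colours and says nothing about the other $G_i$.
\item You cannot use ``other suitable colours'' for the chords, as Step 3 suggests. Such a chord collides with a cycle edge unless that edge can be recoloured.
\end{itemize}
Likewise, ``every such pair is non-adjacent in all $G_i$'' is unjustified without such recolouring.

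\textbf{What the paper supplies.}
\begin{enumerate}
\item From equality in the count it extracts that the losses of one are actually attained, i.e.\ $x_{i-1}x_i\in E(G_i)$ and $x_ix_{i+1}\in E(G_{i-1})$. Shifting colours along $C$ (the cycles $C_a(j)$, $C^a(j)$) then shows every edge of $C$ is strong. This is what makes arbitrary colours usable.
\item It proves $x_ix_{i+2}\notin E(G_j)$ for all $i,j$. This includes the pair $x_nx_2$ flanking $v=x_1$, which the hypothesis does not control (skipping $v$ is forbidden) and which needs a separate path argument. That pair, not the situation in your Step 5, is the real asymmetry caused by $v$; in your construction only $x_j\neq v$ is ever skipped.
\item For any two colours $s,t$, it builds a rainbow $(n-2)$-cycle missing $x_{i+1},x_{i+2}$ and avoiding $G_s\cup G_t$.
\item It shows $d_a(x_i)+d_b(x_i)\ge n$ for all $a\neq b$.
\item Finally it inserts $x_i$ into such a cycle. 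Tightness of that insertion count forces $N_a(x_i)$ and $N_b(x_i)$ to alternate along the cycle and to coincide. This yields one parity class of $C$, and hence $K_{n/2,n/2}$ in every $G_a$.
\end{enumerate}
Without some version of this chain, your proposal reduces the lemma to its own hardest step.
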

\begin{proof}
By Theorem \ref{Hamiltonian-cycle}, let $C_n = x_1 x_2 \cdots x_n x_1$ be a rainbow Hamiltonian cycle with $x_i x_{i+1} \in E(G_i)$ for each $i\in [n]$, where the subscripts are taken modulo $n$. Without loss of generality, suppose to the contrary that there is no rainbow cycle of length $n-1$ containing $x_1$. Then $x_i x_{i+2} \notin E(G_i)\cup E(G_{i+1})$ for each $i\in [n-1]$.
Now we prove this result through the following claims:
\begin{claim}\label{lem-claim-1}
Each edge of $C_n$ is a strong edge.
\end{claim}
\begin{proof}
First, we prove that $x_{i-1}x_{i}\in E(G_{i})$ and $x_{i}x_{i+1}\in E(G_{i-1})$ for each $ i \in [2, n] $.
Since $ x_{i-1}x_{i+1} \notin E(G_{i-1}) \cup E(G_{i}) $, we have
$d_{i}(x_{i-1}) + d_{i-1}(x_{i+1}) \geq n$.
Let
$$A_{i} = \{a \in [n] \setminus \{i-1,i,i+1\} : x_{i-1}x_{a} \in E(G_{i})\}$$ and $$
A_{i-1} = \{a \in [n] \setminus \{i-2,i-1,i\} : x_{i+1}x_{a+1} \in E(G_{i-1})\}.
$$
Note that $|A_{i}| \geq d_{i}(x_{i-1}) - 1 $ and $ |A_{i-1}| \geq d_{i-1}(x_{i+1}) - 1 $. If $ A_{i} \cap A_{i-1} \neq \emptyset $, then there exists $ a \in A_{i} \cap A_{i-1} $, such that 
$x_{i-1}x_{a}\overleftarrow{C_{n}}x_{i+1}x_{a+1}\overrightarrow{C_{n}}x_{i-1}$
is a rainbow cycle of length $n-1$ containing $x_1$, a contradiction. Hence $ A_{i} \cap A_{i-1} = \emptyset $.
Since $A_{i}\cup A_{i-1}\subseteq[n]\setminus\{i-1,i\}$, we have $|A_{i}|+|A_{i-1}|=|A_{i}\cup A_{i-1}|\leq n-2$.
Combining with 
$$|A_{i}|+|A_{i-1}| \geq d_{i}(x_{i-1}) - 1+d_{i-1}(x_{i+1}) - 1\geq n-2,$$
it follows that $|A_{i}|+|A_{i-1}|=|A_{i}\cup A_{i-1}|=n-2$. 
By the definitions of $A_i$ and $A_{i-1}$, we have  $|A_{i}| = d_{i}(x_{i-1}) - 1 $ and $ |A_{i-1}| = d_{i-1}(x_{i+1}) - 1 $, which implies that $x_{i-1}x_i\in E(G_i)$ and $x_ix_{i+1}\in E(G_{i-1})$ for each $ i \in [2, n] $.

Note that $x_1x_2\in E(G_1)$ and $x_1x_2\in E(G_2)$. For each pair of integers $1 \leq a\leq j \leq n$, let $C_a(j)=x_1x_2\cdots x_nx_1$ be a rainbow Hamiltonian cycle such that
\[
\begin{cases}
    x_ix_{i+1}\in E(G_i), & \text{if } i\in [a-1]\cup [j+1,n]; \\
    x_ix_{i+1}\in E(G_j), & \text{if } i=a; \\
    x_ix_{i+1}\in E(G_{i-1}), & \text{if } i\in [a+1,j].
\end{cases}
\]
Note that $C_a(a)=C_n$ for any $a \in [1,n]$. Recall that $x_{i-1}x_{i}\in E(G_{i})$ and $x_{i}x_{i+1}\in E(G_{i-1})$ for each $ i \in [2, n] $. Then we can obtain the rainbow Hamiltonian cycle $C_{a}(j)$ from $C_{a+1}(j)$ for each $a\in[j-1]$.
It is clear that $C_i(j)$ is a rainbow Hamiltonian cycle with $x_ix_{i+1}\in E(G_j)$ for $j \in [i,n]$. \\
By a similar argument,
let $C^a(j)=x_1x_2\cdots x_nx_1$ be a rainbow Hamiltonian cycle such that
\[
\begin{cases}
    x_ix_{i+1}\in E(G_i), & \text{if } i\in [a-1]\cup [j+1,n]; \\
    x_ix_{i+1}\in E(G_a), & \text{if } i=j; \\
    x_ix_{i+1}\in E(G_{i+1}), & \text{if } i\in [a,j-1].
\end{cases}
\]
Note that $C^a(a)=C_n$ for any $a \in [1,n]$. Recall that $x_{i-1}x_{i}\in E(G_{i})$ and $x_{i}x_{i+1}\in E(G_{i-1})$ for each $ i \in [2, n] $. Then we can obtain the rainbow Hamiltonian cycle $C^{a}(j+1)$ from $C^a(j)$ for each $j\in[a,n-1]$. It is clear that $C^i(j)$ is a rainbow Hamiltonian cycle with $x_ix_{i+1}\in E(G_j)$ for $j \in [i]$. 

Therefore, $x_ix_{i+1}\in E(G_a)$ for each $a \in [n]$. Then we can obtain that each edge of $C_n$ is a strong edge, the claim follows.
\end{proof}

\begin{claim}\label{lem-claim-2}
   For any two distinct integers $i, j\in [n]$, we have $x_ix_{i+2}\notin E(G_j)$.
\end{claim}
\begin{proof}
If $x_ix_{i+2}\in E(G_j)$ for some integer $i \in [n-1]$ and some $j\in [n]$, then $x_ix_{i+2}\overrightarrow{C_n}x_i$ is a rainbow cycle of length $n-1$ containing $x_1$, where $x_ix_{i+2}\in E(G_j)$ and $x_jx_{j+1}\in E(G_i)$ by Claim \ref{lem-claim-1}. This contradicts the hypothesis that there is no rainbow cycle of length $n-1$ containing $x_1$. Then $x_ix_{i+2}\notin E(G_j)$ for each integer $i \in [n-1]$ and each integer $j\in [n]$.

If $x_2x_n\in E(G_a)$ for some integer $a\in [n-3]$, then $P=x_1 x_n x_2\overrightarrow{C_n}x_{n-2}$ is a rainbow path of length $n-2$ with $x_2x_n\in E(G_a)$ and    $x_ax_{a+1}\in E(G_1)$ by Claim \ref{lem-claim-1}. We know that $P$ does not use edges of $G_{n-2}$ or $G_{n-1}$.
If $x_1x_{n-2}\in E(G_{n-1})\cup E(G_{n-2})$, then $x_1x_nx_2 \overrightarrow{C_n} x_{n-2}x_1$ forms a rainbow cycle of length $n-1$ containing $x_1$, where $x_2x_n\in E(G_a)$, $x_ax_{a+1}\in E(G_1)$ and $x_1x_{n-2}\in E(G_{n-1})\cup E(G_{n-2})$.
This contradicts the hypothesis that there is no rainbow cycle of length $n-1$ containing $x_1$.
Next, we assume that $x_1x_{n-2}\notin E(G_{n-1})\cup E(G_{n-2})$. Then
$d_{n-1}(x_1) + d_{n-2}(x_{n-2}) \geq n$. 
For the sake of convenience, we relabel the rainbow path $P=x_1'x_2'\cdots x_{n-1}'$.
Define the sets:
$$
S = \{ i \in [3,n-2]: x'_{n-1}x'_i \in E(G_{n-2}) \} \mbox{ and }
T = \{ i \in [n-3] : x'_1 x'_{i+1} \in E(G_{n-1}) \}.
$$
It follows from $x_{n-2} x_n \notin E(G_{n-2})$ and $x_1x_{n-2}\notin E(G_{n-1})\cup E(G_{n-2})$ that $x'_{n-1}x_2'\notin E(G_{n-2})$ and $x'_1x'_{n-1}\notin E(G_{n-1})\cup E(G_{n-2})$. Then $|S| = d_{n-2}(x_{n-2}) - 1$. 
It follows from $x_1x_{n-2}, x_1x_{n-1}\notin E(G_{n-1})$ that $x'_1x_{n-1}', x'_1x_{n-1}\notin E(G_{n-1})$. Then $|T| = d_{n-1}(x_{1}).$ 
Therefore，
$|S| + |T| =  d_{n-2}(x_{n-2}) - 1+ d_{n-1}(x_{1})\geq n-1$.
However, $S\cup T\subseteq [n-2]$ and $|S \cup T|\leq n-2$, this implies that $S\cap T\neq \emptyset$. Choose an integer $k \in S \cap T$, then $x_1'x_{k+1}'\overrightarrow{P}x_{n-1}'x_k'\overleftarrow{P}x_1'$ is a rainbow cycle of length $n-1$ containing $x_1$, as shown in Figure \ref{fig-1}, where $x_1'x_{k+1}'\in E(G_{n-1})$ and $x_{n-1}'x_k'\in E(G_{n-2})$ by the definitions of $S$ and $T$, a contradiction.
Then we conclude that $x_2x_n\notin E(G_i)$ for each integer $i\in [n-3]$. 
\begin{figure}
    \centering
    \includegraphics[width=0.5\linewidth]{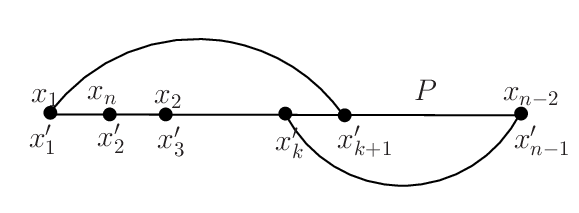}
    \caption{The rainbow cycle of length $n-1$}
    \label{fig-1}
\end{figure}

If $x_2x_n\in E(G_a)$ for some integer $a\in \{n-2,n-1,n\}$, then $P'=x_1 x_2 x_n\overleftarrow{C_n}x_4$ is a rainbow path of length $n-2$ with $x_2x_n\in E(G_a)$ and $x_ax_{a+1}\in E(G_n)$, by a similar discussion, we conclude that $x_2x_n\notin E(G_i)$ for each integer $i\in \{n-2,n-1,n\}$. Then $x_2x_n\notin E(G_i)$ for each integer $i\in [n]$.
The claim follows.
\end{proof}

\begin{claim}\label{lem-claim-3}
For any three integers $i,s,t \in [n]$, there is a rainbow cycle $C_{st}(i)$ of length $n-2$ such that $V=V(C_{st}(i)) \cup \{x_{i+1}, x_{i+2}\}$ and $C_{st}(i)$ contains no edges of $G_s \cup G_t$.
\end{claim}
\begin{proof}
By symmetry, we only need to prove the case of $i=1$. 
By Claim \ref{lem-claim-1}, $x_4x_5\overrightarrow{C_n}x_1$ is a rainbow path of order $n-2$ such that all edges are strong edges. 
Fix two integers $s\in [n]$ and $t\in [n]$, if there are two integers $a,b\in[n]\setminus\{s,t\}$ such that $x_1x_4\in E(G_a) \cup E(G_b)$, then 
$C_{st}(1)=x_4x_5\overrightarrow{C_n}x_1x_4$ is the desired cycle. The claim follows.

Then we assume that $x_1x_4\notin E(G_a) \cup E(G_b) $ for two distinct integers $a,b \in[n]\setminus\{s,t\}$. Then $d_a(x_1) + d_b(x_4) \geq n $.
Define the sets:
$$
J_a = \{ j \in [5, n] : x_1 x_j \in E(G_a) \}  \mbox{ and } 
J_b = \{ j \in [4,n-1] : x_{4} x_{j+1} \in E(G_{b}) \}.
$$
By Claim \ref{lem-claim-1} and Claim \ref{lem-claim-2}, since $x_1x_2$ and $x_3x_4$ are strong edges, and $x_1 x_{3} \notin E(G_a), x_2 x_{4} \notin E(G_b)$, then $|J_a|=d_a(x_1) - 1$ and $|J_b|= d_{b}(x_{4}) - 1.$
Therefore,
$$
|J_a| + |J_b| = d_a(x_1) - 1+ d_b(x_{4}) - 1 \geq n-2.
$$
However, $J_a \cup J_b\subseteq [4,n]$ and $|J_a \cup J_b|\leq n-3$, this implies that $J_a \cap J_b \neq \emptyset$. Choose an integer $k \in J_a \cap J_b$, then $C_{st}(1)=x_1 x_k \overleftarrow{C_n} x_{4} x_{k+1} \overrightarrow{C_n}x_1$ forms a rainbow  cycle of length $n-2$ such that $V=V(C_{st}(1)) \cup \{x_2, x_{3}\}$ where $x_1x_k \in E(G_a)$ and $x_4x_{k+1} \in E(G_b)$ and $x_ix_{i+1} \in \bigcup_{i \in [n] \setminus \{a,b,s,t\} } E(G_i)$ for $i \in [4,k-1] \cup [k+1,n]$. See Figure \ref{fig-2}. Then $C_{st}(1)$ contains no edges of $G_s \cup G_t$. The claim follows.
\end{proof}

\begin{figure}
    \centering
    \includegraphics[width=0.3\linewidth]{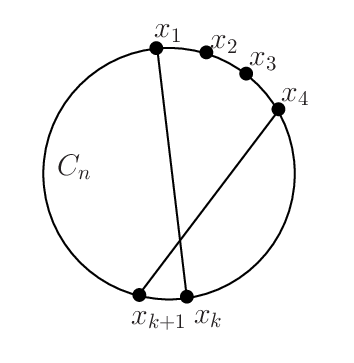}
    \caption{The rainbow cycle $C_{st}(1)$ of length $n-2$}
    \label{fig-2}
\end{figure}

\begin{claim}\label{lem-claim-4}
For all distinct $a,b \in [n]$ and $i \in [n]$,     we have $d_a(x_i)+d_b(x_i) \geq n$. 
\end{claim}
\begin{proof}
Assume, to the contrary, that $d_a(x_i)+d_b(x_i) < n$ for some distinct $a,b \in [n]$ and $i \in [n]$. We first prove that $d_a(x_i)+d_b(x_i) \geq n$ for all $i \in [n-3] \cup \{n-1,n\}$. Fix such an $i$, and suppose that $d_a(x_i)+d_b(x_i) < n$.
By Claim \ref{lem-claim-2}, we have $x_{i}x_{i+2}\notin E(G_j)$ for any integer $j \in [n]$ and $x_{n+1}=x_1$, $x_{n+2}=x_2$. Then 
$d_a(x_{i}) + d_a(x_{i+2}) \geq n $ and $d_b(x_{i}) + d_b(x_{i+2}) \geq n$ for any two distinct integers $a,b \in [n]$.
It follows that  $d_a(x_{i+2}) + d_b(x_{i+2}) > n$.
 By Claim \ref{lem-claim-3}
 let $C_{ab}(i+1)=y_1y_2 \cdots y_{n-2}y_1$ be a rainbow cycle of length $n-2$ such that $V=V(C_{ab}(i+1)) \cup \{x_{i+2}, x_{i+3}\}$ and $C_{ab}(i+1)$ contains no edges of $G_a \cup G_b$.
Hence, $C_{ab}(i+1)$ contains no edges of $G_a$ or $G_b$ and $d_a(x_{i+2}) + d_b(x_{i+2}) > n$ for two distinct integers $a,b \in [n]$.

 Now set $y_{n-1}=y_1=x_1$ and we define the following two sets:
    $$M=\{j\in[n-2]:x_{i+2}y_{j+1}\in E(G_a)\} \mbox{ and }N=\{j\in[n-2]:x_{i+2}y_{j}\in E(G_b)\}.$$
Note that $d_a(x_{i+2},V(C_{ab}(i+1)))+d_b(x_{i+2}, V(C_{ab}(i+1))) > d_a(x_{i+2})+d_b(x_{i+2})-2\geq n-2$. It follows from the definitions of $M$ and $N$ that $|M|+|N|> n-2$. Note that $M\cup N\subseteq [n-2]$, we have $M\cap  N \neq \varnothing $ ,so there is an integer $k \in M\cap N$, then the $y_{k}x_{i+2}y_{k+1}\overrightarrow{C_{ab}(i+1)}y_{k}$ is a rainbow cycle of length $n-1$ containing $x_1$, a contradiction. Then we have  $d_a(x_i)+d_b(x_i) \geq n$ for all $i \in [n-3] \cup \{n-1,n \}$. For $i=n-2$, assume to the contrary that
$d_a(x_{n-2})+d_b(x_{n-2})<n$. 
By Claim \ref{lem-claim-2}, we have $x_{i}x_{i+2}\notin E(G_j)$ for any integer $j \in [n]$ and $x_{n+1}=x_1$, $x_{n+2}=x_2$. Then 
$d_a(x_{n-2}) + d_a(x_{n}) \geq n $ and $d_b(x_{n-2}) + d_b(x_{n}) \geq n$ for any two distinct integers $a,b \in [n]$.
It follows that  $d_a(x_{n}) + d_b(x_{n}) > n$. By Claim \ref{lem-claim-3}
 let $C_{ab}(n-2)=y_1y_2 \cdots y_{n-2}y_1$ be a rainbow cycle of length $n-2$, hence by a  similar discussion we know that  $d_a(x_{n-2})+d_b(x_{n-2}) \geq n$. Then for all distinct $a,b \in [n]$ and $i \in [n]$, we have $d_a(x_i)+d_b(x_i) \geq n$.
 The claim follows.
\end{proof}
\begin{claim}\label{lem-claim-5}
For any $a,i \in [n]$, $N_a(x_i)=\{x_1,x_3,...,x_{n-3}, x_{n-1}\}$ when $i$ is even and $N_a(x_i)=\{x_2,x_4,...,x_{n-2}, x_n\}$ when $i$ is odd.
\end{claim}
\begin{proof}
First, we prove for any $i\in [n-1]$ and $a \in [n]$, $N_a(x_i)=\{x_1,x_3,...,x_{n-3}, x_{n-1}\}$ when $i$ is even and $N_a(x_i)=\{x_2,x_4,...,x_{n-2}, x_n\}$ when $i$ is odd.
   For convenience, we extend the notation by setting $x_0=x_n$ and $C_{ab}(0)=C_{ab}(n)$. 
  By Claim \ref{lem-claim-3}
  let $C_{ab}(i-1)=y_1y_2 \cdots y_{n-2}y_1$ be a rainbow cycle of length $n-2$ such that $V=V(C_{ab}(i-1)) \cup \{x_{i}, x_{i+1}\}$ and $C_{ab}(i-1)$ contains no edges of $G_a \cup G_b$.
So by Claims \ref{lem-claim-3} and \ref{lem-claim-4} we can have that $C_{ab}(i-1)$ contains no edges of $G_a$ or $G_b$ and $d_a(x_{i}) + d_b(x_{i})\geq n$ for any distinct integers $a,b \in [n]$  .
Now set $y_{n-1}=y_1=x_{i-1}$ and we define the following two sets:
    $$M=\{j\in[n-2]:x_{i}y_{j+1}\in E(G_a)\} \mbox{ and }N=\{j\in[n-2]:x_{i}y_{j}\in E(G_b)\}.$$
Note that $d_a(x_{i},V(C_{ab}(i-1)))+d_b(x_{i}, V(C_{ab}(i-1))) \geq d_a(x_{i})+d_b(x_{i})-2\geq n-2$. It follows from the definitions of $M$ and $N$ that $|M|+|N| \geq n-2$. Note that $M\cup N\subseteq [n-2]$. If there is an integer $k \in M\cap N$,  $y_{k}x_{i}y_{k+1}\overrightarrow{C_{ab}(i-1)}y_{k}$ is a rainbow cycle of length $n-1$ containing $x_1$, a contradiction. Then $M \cap N =\varnothing $ and  $M\cup N=[n-2]$. 
 Let $\{t,t+1,\cdots,t+k\}\subseteq M$ be a maximum subset such that $t-1\notin M$ and $t+k+1\notin M$. Then $t-1\in N$ and $t+k+1\in N$. If $k\geq 1$, then $x_iy_{t+k+1}\in E(G_b)$ and $x_{i}y_{t+k}\in E(G_a)$. It follows that $y_{t+k}x_iy_{t+k+1}\overrightarrow{C_{ab}(i-1)}y_{t+k}$ is a rainbow cycle of length $n-1$ containing $x_1$, a contradiction. Then $k=0$. Hence the sets $M$ and $N$ alternate on $[n-2]$, and therefore $|M|=|N|=\frac{n-2}{2}$.
Since $x_{i}x_{i+1}$ is a strong edge by Claim \ref{lem-claim-1}, we have $d_a(x_i)=|M|+1=\frac{n}{2}
\quad\text{and}\quad
d_b(x_i)=|N|+1=\frac{n}{2}$. 
Because the choice of the distinct colors $a,b\in [n]$ was arbitrary, it follows that $d_p(x_i)=\frac{n}{2}$ for every $p\in [n]$. 
Furthermore, we can know that 
$$N_a(x_i)=\{y_1,y_3,...,y_{n-3}, x_{i+1}\} \mbox{ or } N_a(x_i)=\{y_2,y_4,...,y_{n-2}, x_{i+1}\}$$and 
$$N_b(x_i)=\{y_1,y_3,...,y_{n-3}, x_{i+1}\} \mbox{ or } N_b(x_i)=\{y_2,y_4,...,y_{n-2}, x_{i+1}\}.$$

If $N_a(x_i)=\{y_1,y_3,...,y_{n-3}, x_{i+1}\}$ and $N_b(x_i)=\{y_2,y_4,...,y_{n-2}, x_{i+1}\}$, then $x_iy_2\overrightarrow{C_{ab}(i-1)}y_1x_i$ is a rainbow cycle of length $n-1$ containing $x_1$, a contradiction. By a similar argument, we can get a contradiction when $N_b(x_i)=\{y_1,y_3,...,y_{n-3}, x_{i+1}\}$ and $N_a(x_i)=\{y_2,y_4,...,y_{n-2}, x_{i+1}\}$. 
Then $$N_a(x_i)=N_b(x_i)=\{y_1,y_3,...,y_{n-3}, x_{i+1}\} \mbox{ or } N_a(x_i)=N_b(x_i)=\{y_2,y_4,...,y_{n-2}, x_{i+1}\}.$$
By the proof of Claim \ref{lem-claim-3}, we know that the cycle $C_{ab}(i-1)=y_1y_2 \cdots y_{n-2}y_1$ can be composed as follows
 $C_{ab}(i-1)=x_{i+2}x_{i+3}\overrightarrow{C_n}x_{i-1}x_{i+2}$ or $C_{ab}(i-1)=x_{i-1} x_k \overleftarrow{C_n} x_{i+2} x_{k+1} \overrightarrow{C_n}x_{i-1}$ for some $k \in [n] \setminus \{i-2,i-1,i,i+1,i+2\}$. If $C_{ab}(i-1)=x_{i+2}x_{i+3}\overrightarrow{C_n}x_{i-1}x_{i+2}$, by Claim \ref{lem-claim-1} and Claim \ref{lem-claim-2}, since $C_{ab}(i-1)=y_1y_2 \cdots y_{n-2}y_1$ and $N_a(x_i)=\{y_1,y_3,...,y_{n-3}, x_{i+1}\}\mbox{ or } N_a(x_i)=\{y_2,y_4,...,y_{n-2}, x_{i+1}\}$, we can know that $N_a(x_i)=\{x_1,x_3,...,x_{n-3}, x_{n-1}\}$ when $i$ is even and $N_a(x_i)=\{x_2,x_4,...,x_{n-2}, x_n\}$ when $i$ is odd.
  If $C_{ab}(i-1)=x_{i-1} x_k \overleftarrow{C_n} x_{i+2} x_{k+1} \overrightarrow{C_n}x_{i-1}$ , by Claim \ref{lem-claim-1} and Claim \ref{lem-claim-2}, since $C_{ab}(i-1)=y_1y_2 \cdots y_{n-2}y_1$ and $N_a(x_i)=\{y_1,y_3,...,y_{n-3}, x_{i+1}\}$ or $N_a(x_i)=\{y_2,y_4,...,y_{n-2}, x_{i+1}\}$, we can also know that $k$ has the same parity as $i$, since otherwise the parity pattern of the vertices on $C_n$ would force $x_i$ to have neighbours in both parity classes, contradicting the previous description of $N_a(x_i)$. And $N_a(x_i)=\{x_1,x_3,...,x_{n-3}, x_{n-1}\}$ when $i$ is even and $N_a(x_i)=\{x_2,x_4,...,x_{n-2}, x_n\}$ when $i$ is odd.
 It remains to consider the vertex $x_n$. For this case, we apply Claim \ref{lem-claim-3} with $i=n-2$ and obtain a rainbow cycle $C_{ab}(n-2)=y_1y_2\cdots y_{n-2}y_1$. Repeating the above discussion, we can deduce that  $N_a(x_n)=\{x_1,x_3,...,x_{n-3}, x_{n-1}\}$. 
 The claim follows.
\end{proof}
  By Claim \ref{lem-claim-5}, we know that $G_1=\cdots =G_n=K_{\frac{n}{2}, \frac{n}{2}}$. Thus, the lemma follows. 
  \end{proof}

\section{The proof of Theorem \ref{[4,n]-vertex-pancyclicity}}
By Theorem \ref{Hamiltonian-cycle}, assume that $C_n = x_1 x_2 \cdots x_n x_1$ is a rainbow Hamiltonian cycle with $x_i x_{i+1} \in E(G_i)$ for each $i\in [n]$, where the subscripts are taken modulo $n$. Without loss of generality, we only need to show that there exists a rainbow cycle of length $\ell$ containing $x_1$ for every integer $\ell\in [4,n]$. By Theorem \ref{Hamiltonian-cycle}, the result follows when $\ell=n$. Next, we assume that $4\leq \ell \leq n-1$.

First we prove  Theorem \ref{[4,n]-vertex-pancyclicity} follows when $n \geq 8$ below.
\begin{claim}\label{x-(n-2)-cycle}
 There is a rainbow cycle of length $n-2$ containing $x_1$.
\begin{proof}
Suppose to the contrary that there is no rainbow cycle of length $n-2$ containing $x_1$. 
We assert that $x_1 x_3 \in E(G_1)$ or $x_2 x_4 \in E(G_3)$. Otherwise, we assume that $x_1 x_3 \notin E(G_1)$ and $x_2 x_4 \notin E(G_3)$. If $x_1 x_4 \in E(G_1) \cup E(G_2) \cup E(G_3)$, then $C_{n-2} = x_1 x_4 \cdots x_n x_1$ forms a rainbow cycle of length $n-2$ containing $x_1$, leading to a contradiction. Then $x_1 x_4 \notin E(G_1) \cup E(G_2) \cup E(G_3)$, it follows that $d_1(x_1) + d_3(x_4) \geq n.$
Define the sets:
$$
I_1 = \{ i \in [5, n] : x_1 x_i \in E(G_1) \} \mbox{ and }
I_3 = \{ i \in [4, n-1] : x_4 x_{i+1} \in E(G_3) \}.
$$
It follows from $x_2 x_4 \notin E(G_3)$ and $x_1 x_3 \notin E(G_1)$ that
$|I_1| \ge d_1(x_1) - 1$ and $|I_3| \ge d_3(x_4) - 1.$
Therefore,
$$
|I_1| + |I_3| \ge d_1(x_1) + d_3(x_4) - 2 \geq n-2.
$$
However, $I_1 \cup I_3 \subseteq [4, n]$ and $|I_1 \cup I_3|\leq n-3$, it means $I_1 \cap I_3 \neq \emptyset$. Choose an integer $j \in I_1 \cap I_3$, then $x_1 x_j \overleftarrow{C_n} x_4 x_{j+1} \overrightarrow{C_n}x_1$ forms a rainbow  cycle of length $n-2$ containing $x_1$, a contradiction.
Therefore, $x_1 x_3 \in E(G_1)$ or $x_2 x_4 \in E(G_3)$.

Then there exists a rainbow cycle $x_1 z x_4 \overrightarrow{C_n} x_n x_1$  of length $n-1$ containing $x_1$, where $z \in \{x_2, x_3\}$.
Similarly, we could get a rainbow cycle of length $n-2$ containing $x_1$ when $x_5x_8 \in E(G_5) \cup E(G_6) \cup E(G_7)$, a contradiction. Then $x_5x_8 \notin E(G_5) \cup E(G_6) \cup E(G_7)$.
Moreover, $x_5x_7 \notin E(G_5)$ and $x_6x_8 \notin E(G_7)$; otherwise, $x_1zx_4x_5x_7 \overrightarrow{C_n} x_n x_1$ or $x_1zx_4x_5x_6x_8 \overrightarrow{C_n} x_n x_1$ would be a rainbow cycle of length $n-2$,  a contradiction. Define
$$
I_5 = \{ i \in [n]\setminus \{5,6,7\}: x_5x_i \in E(G_5) \} \mbox{ and }
I_7 = \{ i \in  [n]\setminus \{5,6,7\}:x_8x_{i+1} \in E(G_7) \}.
$$
Then $I_5 \cup I_7 \subseteq [n]\setminus\{5,6,7\}$. 

We assert that $I_5 \cap I_7=\emptyset$. Otherwise, assume that $I_5 \cap I_7 \neq \emptyset$. Choose an integer $i \in I_5 \cap I_7$, if 
 $i \in [8,n]$, then $x_5x_i \overleftarrow{C_n} x_8 x_{i+1} \overrightarrow{C_n} x_5$ forms a rainbow cycle of length \(n-2\) containing $x_1$, a contradiction.
If $i \in [4]$, then $x_ix_5 \overleftarrow{C_n} x_{i+1}x_8 \overrightarrow{C_n} x_i$ is a rainbow cycle of length $n-2$ containing $x_1$, a contradiction.

It follows from $I_5 \cap I_7=\emptyset$ and $I_5 \cup I_7 \subseteq [n]\setminus\{5,6,7\}$ that 
$|I_5| + |I_7| = |I_5 \cup I_7| \le n - 3.$
Moreover, $|I_5| \ge d_5(x_5) - 1$ and $|I_7| \ge d_7(x_8) - 1$. Therefore,
$$
n - 3 \ge |I_5| + |I_7| \ge d_5(x_5) + d_7(x_8) - 2 \geq n-2,
$$
a contradiction. Hence, the claim holds.
\end{proof}
\end{claim}

We next prove the following key recursive claim.

\begin{claim}\label{fundamental-claim}
For each $\ell\in [8,n]$, if there is a rainbow cycle of length $\ell$ containing $x_1$, then there is a rainbow cycle of length $\ell-2$ containing $x_1$ in $\mathcal{G}$.
\end{claim}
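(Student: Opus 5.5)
Let $C_\ell=y_1y_2\cdots y_\ell y_1$ be a rainbow cycle of length $\ell\ge 8$ with $y_1=x_1$, and relabel the colours so that $y_jy_{j+1}\in E(G_{c_j})$ with $c_1,\dots,c_\ell$ distinct. The set $R=[n]\setminus\{c_1,\dots,c_\ell\}$ of unused colours has size $n-\ell$. We may also borrow a colour freed by deleting edges. I will argue by contradiction, assuming no rainbow cycle of length $\ell-2$ contains $x_1$, and mimic the argument of Claim \ref{x-(n-2)-cycle} along $C_\ell$ in place of $C_n$.

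\textbf{Step 1: shortcuts over two vertices.} First handle the segment $y_1y_2y_3y_4$. If $y_1y_4$ lies in $G_{c_1}\cup G_{c_2}\cup G_{c_3}$ (or in some $G_r$, $r\in R$), then replacing $y_1y_2y_3y_4$ by the chord $y_1y_4$ gives a rainbow $(\ell-2)$-cycle through $x_1$. The same holds if $y_1y_3\in G_{c_1}$ together with a second such shortcut. So we may assume $y_1y_4\notin E(G_{c_1})\cup E(G_{c_3})$, and the Ore condition gives $d_{c_1}(y_1)+d_{c_3}(y_4)\ge n$.

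\textbf{Step 2: the crossing-chord count.} I then follow the $I_1,I_3$ argument. Define $I_1=\{j: y_1y_j\in E(G_{c_1})\}$ and $I_3=\{j: y_4y_{j+1}\in E(G_{c_3})\}$, with indices ranging over the cycle. Unlike in Claim \ref{x-(n-2)-cycle}, the degrees also count the $n-\ell$ vertices off $C_\ell$, so I will split each degree into a part on $C_\ell$ and a part off it.

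\begin{itemize}
\item An index in $I_1\cap I_3$ gives the cycle $y_1y_j\overleftarrow{C_\ell}y_4y_{j+1}\overrightarrow{C_\ell}y_1$ of length $\ell-2$ through $x_1$, a contradiction. Hence on-cycle degrees satisfy $d^{C}_{c_1}(y_1)+d^{C}_{c_3}(y_4)\le \ell-1$, roughly.
\item Consequently the two vertices have about $n-\ell+1$ neighbours off the cycle in total. By pigeonhole, some $w\notin V(C_\ell)$ is a common neighbour, with $y_1w\in G_{c_1}$ and $wy_4\in G_{c_3}$. This yields the rainbow $(\ell-1)$-cycle $y_1wy_4\cdots y_\ell y_1$ through $x_1$, playing the role of the $(n-1)$-cycle $x_1zx_4\cdots$ in Claim \ref{x-(n-2)-cycle}.
\end{itemize}

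\textbf{Step 3: a second segment.} Next I repeat the argument on a second, disjoint segment $y_5y_6y_7y_8$ of the new cycle; this is where $\ell\ge 8$ is used. I rule out the chords $y_5y_8$, $y_5y_7$ and $y_6y_8$ in the relevant colours, since each would shorten the $(\ell-1)$-cycle to length $\ell-2$. Then I set up $I_5,I_7$ on the $(\ell-1)$-cycle, now also counting vertices off the cycle. The contradiction comes from adding the inequalities from both segments: the off-cycle vertices can absorb at most $n-\ell$ units in total, while the chord counts force strictly more.

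\textbf{Main obstacle.} The hardest step is this accounting of vertices outside $C_\ell$. They contribute to degrees but not to chord-swaps, so a single segment cannot give a contradiction, which is why two far-apart segments are needed. I would first try to show that a vertex off the cycle adjacent (in suitable colours) to both endpoints of some segment already gives an $(\ell-2)$-cycle when combined with a chord at the other segment. Colour bookkeeping (keeping $c_1,c_3$ distinct from $c_5,c_7$ and from the borrowed colour) needs care but is routine.
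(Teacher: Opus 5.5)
\textbf{There is a genuine gap in Step 3.}

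Your Steps 1--2 match the first half of the paper's proof, which builds the rainbow $(\ell-1)$-cycle $y_1zy_4\overrightarrow{C_\ell}y_1$. One repair is needed there. If $y_1y_3\in E(G_{c_1})$ or $y_2y_4\in E(G_{c_3})$, that chord already gives the $(\ell-1)$-cycle with $z=y_3$ or $z=y_2$. Only after excluding these two chords do the bounds $|I_1|\ge d^{C}_{c_1}(y_1)-1$ and $|I_3|\ge d^{C}_{c_3}(y_4)-1$ hold. Your remark about $y_1y_3$ ``together with a second such shortcut'' does not do this.

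The contradiction you propose in Step 3 is false. You add the two segment inequalities on the grounds that ``the off-cycle vertices can absorb at most $n-\ell$ units in total''. But a single vertex outside $C_\ell$ can be adjacent to all of $y_1,y_4,y_5,y_8$ in the relevant colours. So the off-cycle capacity is up to $4(n-\ell)$, while you only force $2(n-\ell+1)$. Each segment yields only the \emph{existence} of a detour vertex, never a counting contradiction. Also, setting up $I_5,I_7$ on the $(\ell-1)$-cycle is the wrong frame. A crossing pair there deletes $y_6,y_7$ from a cycle of length $\ell-1$, which gives length $\ell-3$.

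\textbf{What is missing is splicing the two detours.}

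Run the second count on $C_\ell$ itself. Via $C_\ell$, $y_5y_8$ lies in none of $G_{c_5},G_{c_6},G_{c_7}$, so the definition of $\sigma(\mathcal G)$ gives $d_r(y_5)+d_{c_7}(y_8)\ge n$ for any colour $r$. The $(\ell-1)$-cycle is used only to forbid $y_5y_7\in E(G_r)$ and $y_6y_8\in E(G_{c_7})$. The count then yields $w\notin V(C_\ell)$ with $y_5w\in E(G_r)$ and $wy_8\in E(G_{c_7})$.
\begin{itemize}
\item If $w\neq z$, then $y_1zy_4y_5wy_8\overrightarrow{C_\ell}y_1$ is a rainbow $(\ell-2)$-cycle through $x_1$.
\item If $w=z$ (possible only when $z$ is off $C_\ell$), then $y_1zy_5\overrightarrow{C_\ell}y_1$ is one.
\end{itemize}

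The colour bookkeeping is not routine. The crossing-chord step on $C_\ell$ needs $r\in R\cup\{c_5,c_6\}$. The case $w=z$ needs $r\notin\{c_1,c_5,\dots,c_\ell\}$. So $r$ must be a colour unused by $C_\ell$; the paper takes $r=n$. In particular, $r=c_5$ as in Claim \ref{x-(n-2)-cycle} fails when $w=z$.

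Such an $r\in R$ exists only because $\ell\le n-1$, which you must first obtain from Claim \ref{x-(n-2)-cycle}. For $\ell=n$ the set $R$ is empty, and that claim is what covers this case.
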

\begin{proof}
 Suppose to the contrary that $x_1$ is contained in a rainbow cycle of length $\ell$ but not contained in a rainbow cycle of length $\ell-2$. By Claim \ref{x-(n-2)-cycle}, we have $\ell\leq n-1$. Without loss of generality, assume that $C_{\ell} = x_1 x_2 \cdots x_{\ell} x_1$ is a rainbow  cycle of length $\ell$ with $x_i x_{i+1} \in E(G_i)$ for each $i\in [\ell]$, where the subscripts are taken modulo $\ell$.

 We assert that there is a rainbow cycle $C_{\ell-1}=x_1 z x_4 \overrightarrow{C_{\ell}} x_1$ of length $\ell-1$ containing $x_1$, where $z=x_2$ or $z=x_3$ or $z\in V\setminus V(C_{\ell})$. If $x_1 x_3 \in E(G_1)$ or $x_2 x_4 \in E(G_3)$, then either $x_1 x_3\overrightarrow{C_{\ell}} x_1$ or $x_2 x_4 \overrightarrow{C_l}x_2$ is a rainbow $\ell-1$ cycle containing $x_1$. If $x_1 x_3 \notin E(G_1)$ and $x_2 x_4 \notin E(G_3)$, then  $x_1 x_4 \notin E(G_i)$ for each $i\in[3]\cup\{\ell+1,\ell+2,...,n\}$. Otherwise, if $x_1 x_4 \in E(G_i)$ for some $i\in[3]\cup\{\ell+1,\ell+2,...,n\}$, then $C_{\ell-2} = x_1 x_4 \cdots x_{\ell} x_1$ forms a rainbow cycle of length $\ell-2$ containing $x_1$, leading to a contradiction. Hence, $x_1 x_4 \notin E(G_i)$ for each $i\in[3]\cup\{\ell+1,\ell+2,...,n\}$, it follows that
$d_1(x_1)+d_3(x_4) \ge n$.
Define the following two sets:
$$
I_1 = \{ i \in [5, \ell] : x_1 x_i \in E(G_1) \} \mbox{ and }
I_3 = \{ i \in [4, \ell-1] : x_4 x_{i+1} \in E(G_3) \}.
$$
From $x_2 x_4 \notin E(G_3)$ and $x_1 x_3 \notin E(G_1)$, we derive:
$$
|I_1| \ge d_1(x_1, V(C_{\ell})) - 1 \mbox{ and }
|I_3| \ge d_3(x_4, V(C_{\ell})) - 1.
$$
Therefore,
$$
|I_1| + |I_3| \ge d_1(x_1, V(C_{\ell})) + d_3(x_4, V(C_{\ell})) - 2.
$$
Note that $I_1 \cup I_3 \subseteq [4, \ell]$. If $I_1 \cap I_3 \neq \emptyset$, then, choose an integer $j \in I_1 \cap I_3$, $x_1 x_j \overleftarrow{C_{\ell}} x_4 x_{j+1} \overrightarrow{C_{\ell}} x_1$ would be a rainbow  cycle of length $\ell-2$ containing $x_1$, which is a contradiction. Then $I_1 \cap I_3 = \emptyset$.
It follows that 
$|I_1| + |I_3| = |I_1 \cup I_3| \le \ell - 3.$
Combining with the previous inequality, we have:
$$
d_1(x_1, V(C_{\ell})) + d_3(x_4, V(C_{\ell})) \le \ell - 1.
$$
Since $d_1(x_1) + d_3(x_4) \ge n$, it follows that:
$$
d_1(x_1, V\setminus V(C_{\ell})) + d_3(x_4, V\setminus V(C_{\ell})) \ge n - \ell + 1.
$$
Consequently, there exists a vertex $y\in V\setminus V(C_{\ell})$ such that $x_1 y \in E(G_1)$ and $y x_4 \in E(G_3)$. Then,
$x_1 y x_4 \overrightarrow{C_{\ell}} x_1$ is a rainbow    cycle of length $\ell-1$ containing $x_1$.
From the above discussion, we obtain a rainbow cycle $C_{\ell-1}=x_1 z x_4 \overrightarrow{C_{\ell}} x_1$ of length $\ell-1$ containing $x_1$, where $z=x_2$ or $z=x_3$ or $z\in V\setminus V(C_{\ell})$.

If $x_5x_8 \in E(G_5) \cup E(G_6) \cup E(G_7)$, then $x_5x_8\overrightarrow{C_{\ell}}x_5$ is a rainbow cycle of length $\ell-2$ containing $x_1$, a contradiction. Then $x_5x_8 \notin E(G_5) \cup E(G_6) \cup E(G_7)$. 
Moreover, $x_5x_7 \notin E(G_n)$ and $x_6x_8 \notin E(G_7)$; otherwise,
either $x_5x_7\overrightarrow{C_{\ell-1}} x_5$ or $x_6x_8 \overrightarrow{C_{\ell-1}} x_6$ would be a rainbow cycle of length $\ell-2$ containing $x_1$, a contradiction.
Define
$$
I_n = \{ i \in [\ell] \setminus \{5,6,7\}: x_5x_i \in E(G_n) \} \mbox{ and }
I_7 = \{ i \in [\ell] \setminus \{5,6,7\}:  x_8x_{i+1} \in E(G_7) \}.
$$
Then $I_n \cup I_7 \subseteq [\ell] \setminus \{5,6,7\}$.
If $I_n \cap I_7 \neq \emptyset$, choose an integer $i \in I_n \cap I_7$, then $x_5x_i \overleftarrow{C_{\ell}} x_8 x_{i+1} \overrightarrow{C_{\ell}} x_{5}$ is a rainbow cycle of length $\ell-2$ containing $x_1$, contradiction. 
Thus, $I_n \cap I_7 = \emptyset$, it follows that
$$
|I_n| + |I_7| = |I_n \cup I_7| \le \ell - 3.
$$
Moreover, $|I_n| \ge d_n(x_5, V(C_{\ell})) - 1$ and $|I_7| \ge d_7(x_8, V(C_{\ell})) - 1$. Therefore,
$$
\ell - 3 \ge |I_n| + |I_7| \ge d_n(x_5, V(C_{\ell})) + d_7(x_8, V(C_{\ell})) - 2,
$$
so
$$
d_n(x_5, V(C_{\ell})) + d_7(x_8, V(C_{\ell})) \le \ell - 1.
$$
Consequently,
$$
d_n(x_5, V\setminus V(C_{\ell})) + d_7(x_8, V\setminus V(C_{\ell})) \ge n - \ell + 1.
$$
\begin{figure}
    \centering
    \includegraphics[width=1\linewidth]{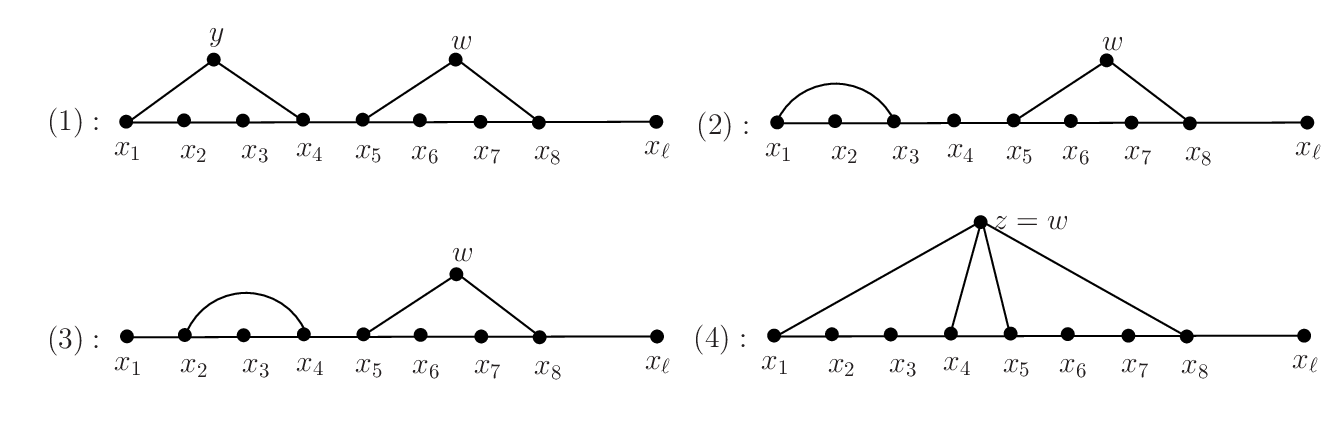}
    \caption{The rainbow cycle of length $\ell-2$}
    \label{fig-3}
\end{figure}
Since $|V\setminus V(C_{\ell})|=n-\ell$, there is at least one vertex $w\in V\setminus V(C_{\ell})$ such that 
$x_5w \in E(G_{n})$, $wx_8 \in E(G_7)$.
If $w=z$, then $x_1zx_5\overrightarrow{C_{\ell}}x_1$ is a rainbow cycle of length $\ell-2$ containing $x_1$, see (4) of Figure \ref{fig-3}, a contradiction. If $w\neq z$, then $x_1zx_4x_5wx_8\overrightarrow{C_{\ell}}x_1$ is a rainbow cycle of length $\ell-2$ containing $x_1$, where $x_1z\in E(G_1)$, $zx_4\in E(G_3)$, $x_5w\in E(G_n)$ and $wx_8\in E(G_7)$. See (1), (2) and (3) of Figure \ref{fig-3}. This contradicts the hypothesis that there is not a rainbow cycle of $\ell-2$ containing $x_1$, the claim follows.
\end{proof}

By Theorem \ref{Hamiltonian-cycle}, $x_1$ is contained in a rainbow Hamiltonian cycle. By Lemma \ref{[n-1]-vertex-pancyclicity}, we know that $G_1=\cdots =G_n=K_{\frac{n}{2},\frac{n}{2}}$ or each vertex is contained in a rainbow cycle of length $n-1$. If the former holds, then the result follows. If the latter holds, then $x_1$ is contained in a rainbow cycle of length $n-1$. Using Claim \ref{fundamental-claim}, then $x_1$ is contained in a rainbow cycle of length $\ell$ for each $\ell\in [6,n]$. Now we prove the following claim:
\begin{claim}\label{fundamental-claim-7-5}
There is a rainbow cycle    of length $5$ containing $x_1$ in $\mathcal{G}$ or  $G_1=\cdots =G_n=K_{\frac{n}{2},\frac{n}{2}}$.
\end{claim}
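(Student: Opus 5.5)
We aim to show that $x_1$ lies on a rainbow $5$-cycle unless $G_1=\cdots=G_n=K_{\frac{n}{2},\frac{n}{2}}$. The plan is to run the argument of Claim \ref{fundamental-claim} once more with $\ell=7$; that claim needed $\ell\ge 8$ only so that the vertices $x_1,\dots,x_8$ used in it are distinct on $C_\ell$.

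\textbf{Step 1 (setup).} We already know $x_1$ lies on a rainbow $7$-cycle, since $n\ge 8$ and Claim \ref{fundamental-claim} gives cycles of every length in $[6,n]$. Relabel it as $C_7=x_1x_2\cdots x_7x_1$ with $x_ix_{i+1}\in E(G_i)$. Suppose $x_1$ lies on no rainbow $5$-cycle.

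\textbf{Step 2 (a $6$-cycle through $x_1zx_4$).} Repeat the first part of the proof of Claim \ref{fundamental-claim} verbatim. Either $x_1x_3\in E(G_1)$ or $x_2x_4\in E(G_3)$, or else $x_1x_4$ avoids $G_1,G_2,G_3$ and every unused colour. In the last case the Ore condition gives $d_1(x_1)+d_3(x_4)\ge n$. Counting with $I_1,I_3$ inside $C_7$ then gives at most $6$ neighbours on the cycle. Hence some $y\notin V(C_7)$ satisfies $x_1y\in E(G_1)$ and $yx_4\in E(G_3)$. In every case we get a rainbow $6$-cycle $x_1zx_4x_5x_6x_7x_1$.

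\textbf{Step 3 (second shortcut).} This step needs care because indices wrap around on $C_7$. Take the segment $x_4x_5x_6x_7$, or its mirror image $x_5x_6x_7x_1$, in place of $x_5\cdots x_8$. A chord $x_4x_7$ in a colour of $\{4,5,6\}$ or an unused colour would close $x_1zx_4x_7x_1$, a rainbow $5$-cycle. So all such chords are absent, and the Ore condition applies to the pair $(x_4,x_7)$ with colours chosen outside those used by $x_1zx_4$. The same disjoint-sets count on $C_7$ forces a common outside neighbour $w$. If $w\ne z$, then $x_1zx_4wx_7x_1$ is rainbow of length $5$. If $w=z$, then $x_1zx_7x_1$ is a triangle, but we need length $5$. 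In that case switch to the mirror segment, or use $x_1x_3$ and $x_2x_4$ type chords directly.

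\textbf{Main obstacle.} The genuine difficulty is that at length $5$ there is too little room: the colour bookkeeping and the coincidence $w=z$ cannot always be avoided. I expect this degenerate configuration to force strong structure. Specifically, all chords $x_ix_{i+2}$ and $x_ix_{i+3}$ of the short cycles would be missing in all colours, which should yield a bipartite pattern as in Claims \ref{lem-claim-1}--\ref{lem-claim-5}. Its neighbourhoods $N_a(x_i)$ alternate in parity, so by the Ore condition $d_a(x_i)=\frac{n}{2}$ throughout, giving $G_1=\cdots=G_n=K_{\frac{n}{2},\frac{n}{2}}$. Concretely, I would first try to show that any rainbow $6$-cycle through $x_1$ admits no chord between vertices at even distance. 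This runs the $M,N$ alternation argument of Claim \ref{lem-claim-5} on the $6$-cycle together with the Hamiltonian cycle, and then concludes the extremal case.
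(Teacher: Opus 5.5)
Your proposal has a genuine gap: the case $w=z$ in Step 3 is never resolved. ``Switch to the mirror segment, or use $x_1x_3$ and $x_2x_4$ type chords'' is not an argument, and the last paragraph is a hope rather than a proof.

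That fallback also aims at an impossible target. You are working with a rainbow $C_7$ through $x_1$, and $K_{\frac n2,\frac n2}$ has no odd cycles, so the extremal conclusion cannot occur here; you must actually exhibit the $5$-cycle. The alternation argument of Claim \ref{lem-claim-5} used strong edges of a Hamiltonian cycle and the $(n-2)$-cycles $C_{st}(i)$. All of those came from the global assumption that $x_1$ misses an $(n-1)$-cycle, and nothing comparable is available for a missing $5$-cycle.

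There is also a slip in Step 3: $x_1zx_4x_7x_1$ has four vertices, not five. The chord $x_4x_7$ is excluded instead by the $5$-cycle $x_1x_2x_3x_4x_7x_1$ on $C_7$. With your segment $x_4x_5x_6x_7$, the case $w=z$ is genuinely stuck: the only known edge $zx_1$ has colour $1$, already used by $x_1x_2$.

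The missing idea is two pieces of colour bookkeeping.

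\textbf{Reserve colour $8$ for $zx_4$.} Since $n\ge 8$, colour $8$ is unused on $C_7$. The chord $x_1x_4$ is absent in every colour outside $\{4,5,6,7\}$, because of the cycle $x_1x_4x_5x_6x_7x_1$. So run the first shortcut with $d_1(x_1)+d_8(x_4)\ge n$ rather than with colour $3$. Then $x_1z\in E(G_1)$, and $zx_4\in E(G_8)$ whenever $z\ne x_3$.

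\textbf{Shortcut the mirror segment $x_5x_6x_7x_1$ with colours $5$ and $7$.} The $6$-cycle excludes $x_5x_7\in E(G_5)$ and $x_6x_1\in E(G_7)$. The cycle $x_1x_2x_3x_4x_5x_1$ excludes $x_5x_1$ in $G_5$ and $G_7$. The usual disjoint-sets count on $C_7$ then gives $w\notin V(C_7)$ with $x_5w\in E(G_5)$ and $wx_1\in E(G_7)$.

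If $w\ne z$, then $x_1zx_4x_5wx_1$ is rainbow, with colours $1$, $3$ or $8$, $4$, $5$, $7$. If $w=z$, then $z$ lies outside $C_7$, so $zx_4\in E(G_8)$. Hence $x_1x_2x_3x_4zx_1$ is rainbow with colours $1,2,3,8,7$. With your choice of colour $3$ for $zx_4$, this last cycle would repeat colour $3$, which is exactly where your argument stalled.
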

\begin{proof}
Suppose to the contrary that $G_1=\cdots =G_n=K_{\frac{n}{2},\frac{n}{2}}$ does not hold and there is no rainbow cycle of length $5$ containing $x_1$. 
Then by Lemma \ref{[n-1]-vertex-pancyclicity}, without loss of generality, assume that $C_7 = x_1x_2\cdots x_7x_1$ is a rainbow cycle of length $7$ containing $x_1$. 
 
 We assert that there is a rainbow cycle of length $6$ containing $x_1$. If $x_1 x_3 \in E(G_1)$ or $x_2 x_4 \in E(G_8)$, then either $x_1x_3x_4x_5x_6x_7x_1$ or $x_1x_2 x_4x_5x_6x_7x_1$ is a rainbow cycle of length $6$ containing $x_1$. Then $x_1 x_3 \notin E(G_1)$ and $x_2x_4 \notin E(G_8)$.
If $x_1 x_4 \in E(G_i)$ for some $i\in[n]\setminus \{4,5,6,7\}$, then $x_1 x_4 x_5x_6x_7 x_1$ forms a rainbow cycle of length $5$ containing $x_1$, leading to a contradiction. Hence, $x_1 x_4 \notin E(G_i)$ for each $i\in[n]\setminus \{4,5,6,7\}$, it follows that $d_1(x_1)+d_8(x_4) \ge n$.
Define the following two sets:
$$
I_1 = \{ i \in \{5,6,7\} : x_1 x_i \in E(G_1) \} \mbox{ and }
I_8 = \{ i \in \{4,5,6\} : x_4 x_{i+1} \in E(G_8) \}.
$$
From $x_2 x_4 \notin E(G_8)$, $x_1 x_3 \notin E(G_1)$ and $x_1 x_4 \notin E(G_i)$ for each $i\in[n]\setminus \{4,5,6,7\}$, we derive:
$$
|I_1| \ge d_1(x_1, V(C_7)) - 1 \mbox{ and }
|I_8| \ge d_8(x_4, V(C_7)) - 1.
$$
Therefore,
$$
|I_1| + |I_8| \ge d_1(x_1, V(C_7)) + d_8(x_4, V(C_7)) - 2.
$$
Note that $I_1 \cup I_8 \subseteq \{4,5,6,7\}$. If $I_1 \cap I_8 \neq \emptyset$, then choose an integer $i \in I_1 \cap I_8$, $x_1 x_i \overleftarrow{C_7} x_4 x_{i+1} \overrightarrow{C_7} x_1$ would be a rainbow  cycle of length $5$ containing $x_1$, which is a contradiction. Then $I_1 \cap I_8 = \emptyset$.
It follows that 
$|I_1| + |I_8| = |I_1 \cup I_8| \le 4.$
Combining with the previous inequality, we have:
$$
d_1(x_1, V(C_7)) + d_8(x_4, V(C_7)) \le 6.
$$
Since $d_1(x_1) + d_8(x_4) \ge n$, it follows that:
$$
d_1(x_1, V\setminus V(C_7)) + d_8(x_4, V\setminus V(C_7)) \ge n -6.
$$
Consequently, there exists a vertex $y\in V\setminus V(C_7)$ such that $x_1 y \in E(G_1)$ and $y x_4 \in E(G_8)$. Then,
$x_1 y x_4x_5x_6x_7x_1$ is a rainbow cycle of length $6$ containing $x_1$.

From the above discussion, we could obtain a rainbow cycle $C_6=x_1 z x_4x_5x_6x_7x_1$ of length $6$ containing $x_1$, where $x_1z\in E(G_1)$, $zx_4\in E(G_8)$ and $z\in \{x_2,x_3\}\cup V\setminus V(C_7)$. If $x_5 x_7 \in E(G_5)$ or $x_6 x_1 \in E(G_7)$, then either $x_1zx_4x_5x_7x_1$ or $x_1zx_4x_5x_6x_1$ is a rainbow cycle of length $5$ containing $x_1$. Then $x_5 x_7 \notin E(G_5)$ and $x_6x_1 \notin E(G_7)$. By a similar argument applied to $x_5$ and $x_1$, we could find a vertex $w$ such that $x_5w\in E(G_5)$, $wx_1\in E(G_7)$ and $w\in V\setminus V(C_7)$.
If $z \neq w$, then $x_1zx_4x_5wx_1$ is a rainbow cycle of length $5$ containing $x_1$. If $z = w$, then $x_1x_2x_3x_4zx_1$ is a rainbow cycle of length $5$ containing $x_1$.
Thus, the claim is proved.
\end{proof}

\begin{claim}\label{4-cycle}

There is a rainbow cycle of length 4 containing $x_1$.

\end{claim}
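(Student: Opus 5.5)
We obtain the rainbow $4$-cycle through $x_1$ by the same ``shortening'' technique used in Claims \ref{fundamental-claim} and \ref{fundamental-claim-7-5}, this time starting from a rainbow $6$-cycle. We are in the case $n\ge 8$ and may assume $G_1=\cdots=G_n=K_{\frac n2,\frac n2}$ fails, since otherwise the theorem already holds. By the discussion preceding Claim \ref{fundamental-claim-7-5}, $x_1$ lies on a rainbow cycle of length $6$. Relabel it as $C_6=x_1x_2\cdots x_6x_1$ with $x_ix_{i+1}\in E(G_i)$ for $i\in[6]$. Colours $7,8,\dots,n$ are unused, and $n\ge 8$ guarantees at least two spare colours; we call them $7$ and $8$. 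Suppose for contradiction that no rainbow $4$-cycle contains $x_1$.

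\textbf{Step 1: a rainbow $5$-cycle of the form $x_1zx_4x_5x_6x_1$.} If $x_1x_3\in E(G_1)$, then $x_1x_3x_4x_5x_6x_1$ is such a cycle. If $x_2x_4\in E(G_7)$, then $x_1x_2x_4x_5x_6x_1$ is. So assume neither holds.

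If $x_1x_4\in E(G_i)$ for some $i\notin\{4,5,6\}$, then $x_1x_4x_5x_6x_1$ is a rainbow $4$-cycle, a contradiction. Hence $x_1x_4$ is a non-edge in some $G_i$, and the Ore condition $\sigma(\mathcal G)\ge n$ gives $d_1(x_1)+d_7(x_4)\ge n$.

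Now count as in Claim \ref{fundamental-claim-7-5}. Let $I_1=\{i\in\{5,6\}:x_1x_i\in E(G_1)\}$ and $I_7=\{i\in\{4,5\}:x_4x_{i+1}\in E(G_7)\}$. A common index $i$ would give the rainbow $4$-cycle $x_1x_i\overleftarrow{C_6}x_4x_{i+1}\overrightarrow{C_6}x_1$, so $I_1\cap I_7=\emptyset$. Together with $I_1\cup I_7\subseteq\{4,5,6\}$ and the two excluded chords $x_1x_3,x_2x_4$, this yields
\[d_1(x_1,V(C_6))+d_7(x_4,V(C_6))\le 5.\]
Therefore
\[d_1(x_1,V\setminus V(C_6))+d_7(x_4,V\setminus V(C_6))\ge n-5>n-6=|V\setminus V(C_6)|,\]
so some outside vertex $y$ has $x_1y\in E(G_1)$ and $yx_4\in E(G_7)$. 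In every case we get a rainbow $C_5=x_1zx_4x_5x_6x_1$ with $z\in\{x_2,x_3\}\cup (V\setminus V(C_6))$.

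\textbf{Step 2: a second detour, disjoint from the first.} Repeat the argument at the other end of the cycle, working with the pair $x_4,x_6$ (or alternatively $x_5,x_1$) and the spare colour $8$. If $x_4x_6\in E(G_4)$, then $x_1zx_4x_6x_1$ is a rainbow $4$-cycle. If $x_5x_1\in E(G_8)$, then $x_1zx_4x_5x_1$ is one. So both chords are absent. Running the same counting on $C_6$ produces an outside vertex $w$ with $x_4w\in E(G_4)$ and $wx_6\in E(G_8)$, or a symmetric variant.

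\textbf{Step 3: combining the two detours.} If $w\ne z$, the cycle $x_1zx_4wx_6x_1$ has length $5$, not $4$, so this step needs care. What we want instead is to shortcut two vertices at once: find $w$ adjacent to both $x_1$ and $x_4$ in fresh colours, or adjacent to both $x_6$ and $x_3$. Then $x_1x_2x_3wx_1$ or $x_1wx_4x_5\ldots$ closes up with the right length. If the outside vertex coincides with $z$, use the pieces of $C_6$ through $z$ to close a $4$-cycle, exactly as in the case $z=w$ of Claim \ref{fundamental-claim-7-5}.

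I expect this last step to be the main obstacle. A length-$4$ cycle has very little room, and the counting arguments give only one outside vertex per pair. So the plan is to choose the pairs carefully: $(x_1,x_3)$ with target cycles $x_1x_2x_3\cdot$, and $(x_1,x_5)$ with target cycles $x_1x_6x_5\cdot$. Each needs a common neighbour in colours outside those already used. For example, apply the Ore condition to a non-adjacent pair such as $x_1x_3$, with colours $7$ and $8$, to obtain a common neighbour $u$ with $x_1u\in E(G_7)$ and $ux_3\in E(G_8)$; then $x_1x_2x_3ux_1$ is a rainbow $4$-cycle. This reduces everything to ruling out the case $x_1x_3\in E(G_i)$ for all relevant $i$. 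In that case $x_1x_3$ serves as a chord, and together with the $5$-cycle $x_1x_3x_4x_5x_6x_1$ and a second such chord it will give the $4$-cycle. If the chords are unavailable in the needed colours, I would fall back on the counting of Corollary \ref{pancyclicity-our}, where $d_a(x)+d_c(y)\ge n$ forces $N_a(x)\cap N_c(y)\ne\emptyset$.
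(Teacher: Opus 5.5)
There is a genuine gap: Steps 2 and 3 never produce a $4$-cycle, and your last paragraph is a plan rather than a proof. The second detour $x_4wx_6$ replaces a path of length $2$ by another path of length $2$, so it saves nothing. As you notice, $x_1zx_4wx_6x_1$ has length $5$. The alternatives you then propose are also misstated: $x_1x_2x_3wx_1$ needs $w$ adjacent to $x_3$, not to $x_4$, and $x_1wx_4x_5\cdots$ has length at least $5$.

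In the final paragraph, $d_7(x_1)+d_8(x_3)\ge n$ forces a common neighbour $u\ne x_2$ only if $x_1x_3\notin E(G_7)\cup E(G_8)$. Only then is $N_7(x_1)\cup N_8(x_3)\subseteq V\setminus\{x_1,x_3\}$, so that the intersection has size at least $2$. Moreover, $x_1x_3$ is not known to be a non-edge of any $G_i$, so the Ore condition may not even apply to this pair. In the remaining case the chord $x_1x_3$ on a $6$-cycle only yields the $5$-cycle $x_1x_3x_4x_5x_6x_1$. Your sentence ``a second such chord will give the $4$-cycle'' is exactly the step that still has to be proved.

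The missing point is that a distance-two chord shortens a cycle by exactly one. So the chord-or-common-neighbour dichotomy must be run on a rainbow $5$-cycle, which you already have from Claim \ref{fundamental-claim-7-5}; your Step 1 re-derives it. This is what the paper does. Take $C_5=x_1\cdots x_5x_1$. Either $x_1x_3\in E(G_1)\cup E(G_2)$, and then $x_1x_3x_4x_5x_1$ is rainbow. Or the Ore condition gives $d_6(x_1)+d_7(x_3)\ge n$. In that case $x_1x_3,x_1x_4\notin E(G_6)$ and $x_3x_5,x_3x_1\notin E(G_7)$, since each would close a rainbow $4$-cycle. Hence these two neighbourhoods meet $V(C_5)$ in at most $4$ vertices in total. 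So some $s\notin V(C_5)$ has $x_1s\in E(G_6)$ and $sx_3\in E(G_7)$, and $x_1x_2x_3sx_1$ is rainbow.

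Alternatively, your own setup closes in two lines. Your Step 1 observation that $x_1x_4\notin E(G_i)$ for all $i\notin\{4,5,6\}$ uses only the contradiction hypothesis, so it holds in every case. In particular $x_1x_4\notin E(G_2)\cup E(G_8)$, which gives $|N_2(x_1)\cap N_8(x_4)|\ge d_2(x_1)+d_8(x_4)-(n-2)\ge 2$. Choose $w\ne z$ in this intersection. Then $x_1zx_4wx_1$ is a rainbow $4$-cycle with colours $1,c,8,2$, where $c\in\{3,7\}$ is the colour of $zx_4$.
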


\begin{proof}

By Claims \ref{x-(n-2)-cycle}, \ref{fundamental-claim}, \ref{fundamental-claim-7-5} and Lemma \ref{[n-1]-vertex-pancyclicity}, if $G_1=\cdots =G_n=K_{\frac{n}{2},\frac{n}{2}}$, then there is  a rainbow cycle of length 4 containing $x_1$. Otherwise, there exists a rainbow cycle $C_5 = x_1x_2x_3x_4x_5x_1$ of length 5.
Without loss of generality, assume that $x_i x_{i+1} \in E(G_i)$ for each $i$, where $x_6=x_1$.
Consider the edge $x_1x_3$.
 If $x_1x_3 \in E(G_1)$ or $x_1x_3 \in E(G_2)$, then $x_1x_3x_4x_5x_1$ forms a rainbow $C_4$ containing $x_1$.
If $x_1x_3 \notin E(G_1) \cup E(G_2)$, then we have $d_6(x_1) + d_7(x_3) \geq n$. Moreover, note that $x_1x_3, x_1x_4 \notin E(G_6)$ and $x_3x_5, x_3x_1 \notin E(G_7)$. 
    Consequently,
    $d_6(x_1, V\setminus V(C_5)) + d_7(x_3, V\setminus V(C_5)) \geq n - 4$. 
    Since $|V\setminus V(C_5)| = n-5$, there exists a vertex $s \in V\setminus V(C_5)$ such that $x_1s \in E(G_6)$ and $sx_3 \in E(G_7)$. 
    Then $x_1x_2x_3sx_1$ is a rainbow cycle of length 4 containing $x_1$.
Thus, in all cases, we obtain a rainbow cycle of length 4 containing $x_1$.
\end{proof}
Then Theorem \ref{[4,n]-vertex-pancyclicity} follows when $n \geq 8$. Now we discuss the case $4 \leq n \leq 7$. 
\begin{claim}\label{small n}
The result follows when $4 \leq n\leq 7$.
\end{claim}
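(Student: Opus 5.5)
We need to show that for $4\le n\le 7$, $x_1$ lies in a rainbow cycle of every length $\ell\in[4,n]$, unless $G_1=\cdots=G_n=K_{\frac n2,\frac n2}$. Lengths $\ell=n$ and $\ell=n-1$ are already handled for every $n$. Theorem \ref{Hamiltonian-cycle} gives $\ell=n$. Lemma \ref{[n-1]-vertex-pancyclicity} gives $\ell=n-1$ or the bipartite exception, and its proof places no lower bound on $n$.

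So I only need the intermediate lengths. For $n=4$ nothing remains. For $n=5$ only $\ell=4=n-1$ occurs, which is covered. For $n=6$ I need $\ell=4$. For $n=7$ I need $\ell=4,5$. The plan is to rerun the short arguments of Claims \ref{fundamental-claim-7-5} and \ref{4-cycle}, checking that they never use more than the available colours or vertices.

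\textbf{Case $n=7$, $\ell=5$.} Here the rainbow Hamiltonian cycle $C_7$ itself plays the role of the $7$-cycle in Claim \ref{fundamental-claim-7-5}. Take colour $8$ to be any colour not used on the relevant segment; I will use colour $2$ or $3$, freed by skipping the path $x_1x_2x_3x_4$. That claim's argument first produces a rainbow $6$-cycle $x_1zx_4x_5x_6x_7x_1$ through $x_1$. This $6$-cycle is in any case available from Lemma \ref{[n-1]-vertex-pancyclicity}. When $V\setminus V(C_7)=\emptyset$, the counting in that claim does not produce an outside vertex $y$. Instead, the inequality $d_1(x_1,V(C_7))+d_{c}(x_4,V(C_7))\ge 7$ together with $|I_1|+|I_c|\le 4$ forces $I_1\cap I_c\neq\emptyset$, which directly yields a rainbow $5$-cycle. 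I must check that the free colours suffice: the $5$-cycle uses colours $4,5,6,7$ plus one colour on the chord, so choosing the chord colour among $\{1,2,3\}$ works.

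\textbf{Case $\ell=4$, $n\in\{6,7\}$.} I will mimic Claim \ref{4-cycle}. Start from a rainbow $5$-cycle $C_5$ through $x_1$: for $n=7$ it comes from the previous step, and for $n=6$ from Lemma \ref{[n-1]-vertex-pancyclicity}. If $x_1x_3\in E(G_1)\cup E(G_2)$, we are done. Otherwise the Ore condition gives $d_6(x_1)+d_7(x_3)\ge n$; when $n=6$ I replace colour $7$ by any second colour outside $\{3,4,5\}$, e.g.\ colour $1$ or $2$. The nonadjacencies inside $C_5$ then push at least $n-4>n-5$ of these neighbours outside $C_5$. This produces a common outside neighbour $s$ and the rainbow $4$-cycle $x_1x_2x_3sx_1$.

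The main obstacle is that for $n=6$ there are only six colours. The two auxiliary colours must avoid those used on $x_1x_2$, $x_2x_3$, which are $1$ and $2$. They may coincide with the colours of $x_3x_4,x_4x_5,x_5x_1$, which are $3,4,5$, because those edges are not used in $x_1x_2x_3sx_1$. So I pick colours from $\{3,4,5,6\}$, and the count goes through. If some degenerate configuration survives (e.g.\ $x_1x_4$ and $x_3x_5$ edges in the chosen colours), I would fall back to a direct case check on $C_6$. That check uses Claims \ref{lem-claim-1}--\ref{lem-claim-2}-style reasoning: every chord $x_ix_{i+2}$ missing from all colours forces degrees of exactly $n/2$, hence the $K_{3,3}$ exception.
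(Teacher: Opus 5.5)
\textbf{Cases $n\le 5$ and $n=7$.} These are handled as in the paper, and this part is fine. For $n=7$, the Ore counts at $(x_1,x_4)$ with colours $1$ and $c\in\{2,3\}$ become direct contradictions because $V=V(C_7)$. The same happens at $(x_5,x_1)$ with colours $5$ and $7$, a second pass you leave implicit. Together these give the rainbow $5$-cycle $x_1yx_4x_5zx_1$. Claim~\ref{4-cycle} then runs verbatim, since colours $6$ and $7$ exist.

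\textbf{The gap is $n=6$, $\ell=4$.} Your count needs four exclusions: $x_1x_3,x_1x_4\notin E(G_p)$ and $x_3x_1,x_3x_5\notin E(G_q)$.
\begin{itemize}
\item These come from the $4$-cycles $x_1x_3x_4x_5x_1$, $x_1x_2x_3x_4x_1$ and $x_1x_2x_3x_5x_1$, and those cycles do use the edges coloured $3,4,5$.
\item With only colour $6$ free, one of $p,q$ lies in $\{3,4,5\}$. For that colour $r$, the exclusion $x_1x_3\notin E(G_r)$ is unavailable, because $x_1x_3x_4x_5x_1$ would repeat $r$.
\item Running through all admissible pairs, every choice of $(p,q)$ fails exactly when $c(x_1x_3)=\{3,4,5\}$. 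In that case you only get $d_p(x_1,V(C_5))+d_q(x_3,V(C_5))\le 5$. This forces just one of the two edges to the unique outside vertex $s$, so no $4$-cycle $x_1x_2x_3sx_1$ follows.
\end{itemize}

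\textbf{Why your fallback does not help.}
\begin{itemize}
\item The obstruction is a chord present in many colours, not a chord missing from all colours.
\item The degree-$\frac n2$ rigidity in the proof of Lemma~\ref{[n-1]-vertex-pancyclicity} was derived from the absence of a rainbow $(n-1)$-cycle. It does not transfer to the absence of a rainbow $4$-cycle.
\item $K_{3,3}$ is not an exception for $\ell=4$: it has rainbow $4$-cycles through every vertex.
\item The degenerate example you cite ($x_1x_4$ or $x_3x_5$ in the chosen colours) is actually harmless for $p=6$, $q\in\{3,4\}$.
\end{itemize}

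\textbf{How the paper avoids this.} It never uses the $5$-cycle and works directly on the rainbow Hamiltonian $C_6$.
\begin{itemize}
\item Suppose $|c(x_2x_6)\cap\{2,3,4,5\}|\le 2$. Then two distinct colours $a,b\in\{2,3,4,5\}\setminus c(x_2x_6)$ give $d_a(x_2)+d_b(x_6)\ge 6$.
\item Hence $d_a(x_2,\{x_3,x_4,x_5\})+d_b(x_6,\{x_3,x_4,x_5\})\ge 4$. This gives a common neighbour $x_j$ and the rainbow cycle $x_1x_2x_jx_6x_1$.
\item So $|c(x_2x_6)\cap\{2,3,4,5\}|\ge 3$. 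Symmetrically, using the cycles $x_1x_2x_3x_jx_1$, one gets $|c(x_1x_3)\cap\{3,4,5,6\}|\ge 3$.
\item Then $x_1x_3x_2x_6x_1$ is rainbow, with two distinct colours from $\{3,4,5\}$ on the chords.
\end{itemize}
There, a chord carrying many colours is exactly what closes the cycle, so your bad configuration cannot obstruct it.
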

\begin{proof}   
It is clear that the claim follows when $n = 4, 5$. 
If  $n = 6$, then Lemma \ref{[n-1]-vertex-pancyclicity} implies that there is a rainbow cycle of length $5$ containing $x_1$ in $\mathcal{G}$ or $G_1=\cdots =G_6=K_{3, 3}$. If there is a rainbow cycle of length $5$ containing $x_1$, assume that $C_6=x_1x_2\cdots x_6x_1$ is a rainbow Hamiltonian cycle with $x_i x_{i+1} \in E(G_i)$ for $i \in [6]$, where $x_7=x_1$. Suppose to the contrary that there is no rainbow cycle of length $4$ containing $x_1$.  If $c(x_2x_6) \cap \{2,3,4,5\} \leq 2$, then $d_a(x_2) + d_b(x_6) \geq 6$ for $a,b \notin c(x_2x_6)\cap \{2,3,4,5\}$, then $d_a(x_2, \{x_3,x_4,x_5\}) + d_b(x_6,\{x_3,x_4,x_5\}) \geq 4$. There is a rainbow cycle $x_1x_2x_3x_6x_1$ or $x_1x_2x_4x_6x_1$ or  $x_1x_2x_5x_6x_1$ of length $4$ containing $x_1$. It follows that $c(x_2x_6)  \cap \{2,3,4,5\} \geq 3 $. By a similar argument, we obtain $c(x_1x_3)  \cap \{3,4,5,6\} \geq 3 $. Thus, there is a rainbow cycle $x_1x_3x_2x_6x_1$ of length $4$ containing $x_1$, a contradiction. So Theorem \ref{[4,n]-vertex-pancyclicity} holds when $n = 6$. 

If  $n = 7$, assume that $C_7=x_1x_2\cdots x_7x_1$ is a rainbow Hamiltonian cycle with $x_i x_{i+1} \in E(G_i)$ for $i \in [7]$, where $x_8=x_1$. By Lemma \ref{[n-1]-vertex-pancyclicity}  we know there is a rainbow cycle of length $6$ containing $x_1$ in $\mathcal{G}$. Suppose to the contrary that there is no rainbow cycle of length $5$ containing $x_1$. We assert that $x_1 x_3 \in E(G_1)$ or $x_2x_4 \in E(G_3)$. Otherwise, $x_1 x_3 \notin E(G_1)$ and $x_2x_4 \notin E(G_3)$.
If $x_1 x_4 \in E(G_i)$ for some $i\in\{1,2,3\}$, then $x_1 x_4 x_5x_6x_7 x_1$ forms a rainbow cycle of length $5$ containing $x_1$, leading to a contradiction. Hence, $x_1 x_4 \notin E(G_i)$ for each $i\in \{1,2,3\}$, it follows that $d_1(x_1)+d_3(x_4) \ge 7$.
Define the following two sets:
$$
I_1 = \{ i \in \{5,6,7\} : x_1 x_i \in E(G_1) \} \mbox{ and }
I_3 = \{ i \in \{4,5,6\} : x_4 x_{i+1} \in E(G_3) \}.
$$
From $x_2 x_4 \notin E(G_3)$, $x_1 x_3 \notin E(G_1)$ and $x_1 x_4 \notin E(G_i)$ for each $i\in \{1,2,3\}$, we derive:
$$
|I_1| \ge d_1(x_1) - 1 \mbox{ and }
|I_3| \ge d_3(x_4) - 1.
$$
Therefore,
$$
|I_1| + |I_3| \ge d_1(x_1) + d_3(x_4) - 2.
$$
Note that $I_1 \cup I_3 \subseteq \{4,5,6,7\}$. If $I_1 \cap I_3 \neq \emptyset$, then, choose an integer $i \in I_1 \cap I_3$, $x_1 x_i \overleftarrow{C_7} x_4 x_{i+1} \overrightarrow{C_7} x_1$ would be a rainbow  cycle of length $5$ containing $x_1$, which is a contradiction. Then $I_1 \cap I_3 = \emptyset$.
It follows that 
$|I_1| + |I_3| = |I_1 \cup I_3| \le 4.$
Combining with the previous inequality, we have:
$$
d_1(x_1) + d_3(x_4) \le 6.
$$
Since $d_1(x_1) + d_3(x_4) \ge 7$, a contradiction. So $x_1 x_3 \in E(G_1)$ or $x_2x_4 \in E(G_3)$. By a similar argument applied to $x_5$ and $x_1$, we can obtain that $x_5 x_7 \in E(G_5)$ or $x_6x_1 \in E(G_7)$. Then $x_1yx_4x_5zx_1$ is a rainbow cycle of length $5$ where $y \in \{x_2,x_3\}$, $z \in \{x_6,x_7\}$ and $x_1y\in E(G_1)$, $yx_4\in E(G_3)$, $x_5z\in E(G_5)$, $zx_1\in E(G_7)$.
Since there exists a rainbow cycle $x_1yx_4x_5zx_1$ of length 5, by a similar  discussion with the case in Claim \ref{4-cycle} we obtain a rainbow cycle of length 4 containing $x_1$. Then the claim follows when $n=7$.


\end{proof}
Combining the above discussion, the proof of Theorem \ref{[4,n]-vertex-pancyclicity} is completed. $\hfill\square$

\section{The proof of Theorem \ref{vertex-pancyclicity-ore}}
If $n=3$, then $G_1=G_2=G_3=K_3$. The result follows.
If $n\geq 4$, then $\frac{4n-3}{3}>n$.
By Theorem \ref{[4,n]-vertex-pancyclicity}, we know that each vertex is contained in a rainbow cycle of length $\ell$ for each $\ell\in[4,n]$. 
For each vertex $x\in V$, if $xy\in E(G_i)$ for some $i\in [n]$ and $xy$ is not a strong edge, then $d_j(x)+d_k(y)\geq \frac{4n-3}{3}$ for any two integers $j,k\in[n]$. It is easy to find a vertex $z\in N_{j}(x)\cap N_{k}(y)$. Then $xyzx$ is a rainbow triangle. The result follows. 
It follows that $xy$ is a strong edge if $xy\in E(G_i)$ for some $i\in [n]$. 

Note that $d_i(x)\geq \frac{4n-3}{3}-(n-1)\geq \frac{n}{3}$ for each $i\in [n]$. 
It follows from $n\geq 4$ that $d_i(x)\geq 2$. Then $N_{G_i}(x)$ is an independent set in $G_i$ for each $i\in [n]$. Choose two vertices $x_1,x_2\in N_{i}(x)$, then $$\frac{4n-3}{3}\leq d_{i}(x_1)+d_{j}(x_2)\leq 2(n-d_i(x)-1)+2.$$
Then we have $d_i(x)\leq \frac{n}{3}-\frac{1}{2}$.
Choose a vertex $y\in V\setminus (N_{G_i}(x)\cup \{x\})$, then $xy\notin E(G_i)$ for each $i\in [n]$. 
Then 
$\frac{4n}{3}-1\leq d_{i}(x)+d_{j}(y)\leq d_{i}(x)+n-1\leq \frac{4n}{3}-\frac{3}{2},$ a contradiction. The result follows.
$\hfill\square$

\section{Acknowledgments}

            This work was supported by the National Key R\&D Program of China (No. 2023YFA1009602) and the National Natural Science Foundation of China (Grant No. 12231018). The authors declare that they have no conflict of interest. No data were used for the research described in this article.

\bibliographystyle{abbrv}
\bibliography{main}





\end{document}